\pgfplotsset{compat=1.11}
\tikzset{
  state/.style={circle,draw,minimum size=6ex},
  arrow/.style={-latex, shorten >=1ex, shorten <=1ex}}
\theoremstyle{plain}
\newtheorem{prop}{Proposition}[section]
\newtheorem{lemma}[prop]{Lemma}
\newtheorem{theorem}[prop]{Theorem}
\newtheorem{cor}[prop]{Corollary}
\newtheorem*{theorem-A*}{Theorem A}
\theoremstyle{definition}
\theoremstyle{remark}
\newtheorem{remark}[prop]{Remark}
\newcommand{\C}{\mathbb{C}}
\newcommand{\R}{\mathbb{R}}
\newcommand{\CC}{\mathbb{C}}
\newcommand{\pmat}[4]{\begin{pmatrix} #1&#2\\#3&#4\end{pmatrix}}
\DeclareMathOperator{\diag}{diag}
\DeclareMathOperator{\tr}{tr}
\DeclareMathOperator{\dvol}{dvol}
\DeclareMathOperator{\vol}{vol}
\DeclareMathOperator{\id}{id}
\newcommand*{\defeq}{\stackrel{\text{def}}{=}}
\begin{document}

\title[]{Uniform resonance free regions for convex cocompact hyperbolic surfaces and expanders}

\author[L.\@ Soares]{Louis Soares}
\email{louis.soares@gmx.ch}

\subjclass[2020]{Primary: 58J50, 11M36, Secondary: 37C30, 37D35, 05C90}
\keywords{resonances, hyperbolic surfaces, expander graphs, Selberg zeta function, transfer operators}
\begin{abstract}
We prove that every family of coverings of any infinite-area, convex cocompact hyperbolic surface has uniform spectral gap, provided that the associated Schreier graphs form a family of two-sided expanders. This extends the results of Brooks \cite{Brooks}, Burger \cite{Burger1, Burger2}, and Bourgain--Gamburd--Sarnak \cite{BGS} to a setting where the Laplacian has no $L^2$-eigenvalues. In particular, the notion of spectral gap needs to be redefined in terms of the resonances of the Laplacian. As an immediate corollary, we obtain uniform spectral gap for congruence covers of convex cocompact surfaces, a result previously established by Oh--Winter \cite{OhWinter} and Bourgain--Kontorovich--Magee \cite{BKM}. 

Moreover, given any convex cocompact hyperbolic surface $X$, we provide a new ``universal'' resonance-free region for $X$, by which we mean a region in the complex plane that contains no resonances for any finite cover of $X$. This enlarges the universal resonance-free region given by Magee--Naud \cite{NaudMagee}. Our methods rely on the thermodynamic formalism for twisted Selberg zeta functions.
\end{abstract}
\maketitle

%

\section{Introduction}\label{sec:intro}

\subsection{Spectral gap and expanders}\label{sec:intro0}

Let $X$ be a compact Riemannian manifold and let $(X_n)_{n\in \mathbb{N}}$ be family of finite-degree coverings of $X$. Let $\lambda_1(X_n)$ denote the first non-zero eigenvalue of the Laplacian on $X_n$. By Brooks \cite{Brooks} and Burger \cite{Burger1, Burger2}, the asymptotic behaviour of $\lambda_1(X_n)$ as $n\to \infty$ is largely determined by a combinatorial property of the fundamental groups of the coverings $X_n$. This property can be described using expander graphs, or expanders for short. There is a vast literature on expanders and their applications in computer science and pure mathematics, see for instance \cite{HLW06,Lubotzky2,Sar04,KowalskiExp}. Roughly speaking, expanders are sparse but highly connected graphs. We will adopt a ``spectral'' point of view: a finite $k$-regular graph $\mathcal{G}$ is called
\begin{itemize}
\item an ``$\epsilon$-expander'' if all the eigenvalues of the adjacency operator $A_{\mathcal{G}}$, other than the trivial eigenvalue $k$, lie in the interval $[-k, k(1-\epsilon)]$, and
\item a ``two-sided $\epsilon$-expander'' if all the eigenvalues of $A_{\mathcal{G}}$, other than $k$, lie in the interval $[-k(1-\epsilon), k(1-\epsilon)]$.
\end{itemize}
A family of $k$-regular graphs $(\mathcal{G}_n)_{n\in \mathbb{N}}$ is called a family of ``(two-sided) expanders'' if there exists some $\epsilon>0$ such that each $\mathcal{G}_n$ is a (two-sided) $\epsilon$-expander.

Let us now describe the graphs of interest to us. Fix a Riemannian manifold $X$, let $\Gamma = \pi_1(X)$ be its fundamental group, and let $S$ be a fixed set of generators of $\Gamma$. Let $X_n$ be a family of finite covers of $X$. The fundamental groups of $X_n$, denoted by $\Gamma_n$, can be viewed as subgroups of $\Gamma$. For each $n\in \mathbb{N}$ let $\mathcal{G}_n$ be the ``Schreier coset graph'' $\mathcal{G}(\Gamma,\Gamma_n,S)$. It is built as follows: the vertices of $\mathcal{G}_n$ are the left cosets of $\Gamma/\Gamma_n$ (of which there are finitely many) and two vertices of $\mathcal{G}_n$ are joined by an edge if and only if the corresponding cosets differ by left multiplication by an element in $S \cup S^{-1}$. The aforementioned results of Brooks \cite{Brooks} and Burger \cite{Burger1,Burger2} tell us  the following:

\begin{theorem-A*}
Let $X$ be a compact Riemannian manifold and let $(X_n)_{n\in \mathbb{N}}$ be family of finite-degree coverings of $X$. The following conditions are equivalent:
\begin{enumerate}[(1)]
\item The graphs $\mathcal{G}_n$ form a family of expanders. 
\item The Laplacians on $X_n$ have a uniform positive spectral gap.
\end{enumerate}
\end{theorem-A*}
Here ``uniform spectral gap'' means that there exists some constant $c>0$ such that for all $n\in \mathbb{N}$,
$$
\lambda_1(X_n) \geqslant \lambda_0(X_n) + c,
$$
where $0 \leqslant \lambda_0(X) < \lambda_1(X) \leqslant \lambda_2(X) \leqslant \cdots$ denote the eigenvalues of the Laplacian on $X$. Bourgain--Gamburd--Sarnak \cite{BGS} extended this result to all geometrically finite hyperbolic quotients $X = \Gamma\backslash\mathbb{H}^2$ whose critical exponent $\delta$ exceeds $ \frac{1}{2}$. We note that for compact manifolds the base eigenvalue $\lambda_0(X)$ is equal to zero, while for infinite-area hyperbolic surfaces with $\delta > \frac{1}{2}$ we have $\lambda_0(X)=\delta(1-\delta)$).

In this paper, we consider the case where $X=\Gamma\backslash\mathbb{H}^2$ is an \textit{infinite-area}, \textit{convex cocompact} hyperbolic surface. In this case, $X$ can be decomposed into a compact surface $N$ with geodesic boundary\footnote{The surface $N$ is sometimes called the ``Nielsen region''} and a finite number of funnel ends attached to $N$. This includes all hyperbolic surfaces with $\delta \leqslant \frac{1}{2},$ see Remark \ref{rem:secrem} below.

Let us first review some standard facts about the spectral theory of infinite-area hyperbolic surfaces, referring the reader to Borthwick's book \cite{Borthwick_book} for a comprehensive discussion. Every hyperbolic surface $X$ can be realized as a quotient $\Gamma\backslash \mathbb{H}^2$, where $\mathbb{H}^2$ is the hyperbolic plane and $\Gamma\subset \mathrm{PSL}_2(\mathbb{R})$ is a Fuchsian group, i.e., a discrete group of M\"{o}bius transformations. For the rest of this introduction we assume that $\Gamma$ is
\begin{itemize}
\item \textit{finitely generated}, which is equivalent to $X=\Gamma \backslash \mathbb{H}^2$ being geometrically and topologically finite, and
\item \textit{non-cofinite}, which is equivalent with $X$ having infinite area.
\end{itemize}
The limit set of $X$, denoted by $\Lambda$, is defined as the set of accumulation points of all orbits of the action of $\Gamma$ on $\mathbb{H}^2$. Under the above conditions, $\Lambda$ is a Cantor-like fractal subset of $\partial \mathbb{H}^2 \cong \mathbb{R}\cup \{ \infty \}$. The Hausdorff dimension of $\Lambda$, denoted by $\delta$, is equal to the exponent of convergence of the Poincar\'{e} series for $\Gamma$ and is sometimes referred to as the ``critical exponent'' of $X$.

The $L^2$-spectrum of the positive Laplacian $\Delta_X$ on $X$ is rather sparse and has been described completely by Lax--Phillips~\cite{LP1} as follows:
\begin{itemize}
\item The continuous spectrum of $\Delta_X$ coincides with the interval $[1/4, \infty)$. It does not contain any embedded $L^2$-eigenvalues.
\item For $\delta > \frac{1}{2}$ the pure point spectrum is a finite set of points in the interval $\left[ \frac{1}{4}, \delta(1-\delta) \right].$
\item For $\delta \leqslant \frac{1}{2}$ the pure point spectrum is empty.
\end{itemize}
This description of the spectrum shows that the resolvent operator
$$
R_X(s)=\left(\Delta_X-s(1-s) \right)^{-1}:L^2(X)\rightarrow L^2(X)
$$
is well-defined and analytic on the half-plane $\mathrm{Re}(s)> \frac{1}{2}$, except at the finite set of poles corresponding to the pure point spectrum of $\Delta_X$. In the infinite-area case, the main spectral data of interest are the \textit{resonances} of $X$. These are defined as the poles of the meromorphic continuation of
\begin{equation}\label{resolvent}
R_X(s):C_0^\infty(X)\rightarrow C^{\infty}(X)
\end{equation}
to the whole complex plane. The continuation of \eqref{resolvent} can be deduced from the analytic Fredholm theorem using a suitable parametrix provided by Guillop\'e--Zworski \cite{GuiZwor2}. In the sequel, we denote by $\mathcal{R}_X$ the multiset of resonances of $X$; the multiplicity of a resonance $\zeta$ is defined as the rank of the residual operator of $R_X(s)$ at $s=\zeta$. By a a result of Patterson \cite{Patterson_2}, $R_X(s)$ has a simple pole at $s=\delta$ and no other poles in the half-plane $\mathrm{Re}(s)\geqslant \delta$. In other words, the point $s=\delta$ is the resonance for $X$ with the largest real part. 

Naud \cite{Naud2} (see also Bourgain--Dyatlov \cite{Bourgain_Dyatlov}) proved that every non-elementary, geometrically finite hyperbolic surface $X = \Gamma\backslash \mathbb{H}^2$ has a ``spectral gap'' in the following sense: there is some $\eta>0$ so that
\begin{equation}\label{nauds_result}
\mathcal{R}_X \cap \{ \mathrm{Re}(s) \geqslant \delta - \eta \} = \{ \delta \},
\end{equation}
or equivalently,
\begin{equation}\label{def:gap}
\mathrm{gap}(X) \defeq \inf_{\text{$ \zeta \in \mathcal{R}_X\setminus \{ \delta \}$}} (\delta-\mathrm{Re}(\zeta)) \geqslant \eta.
\end{equation}
When $\delta > \frac{1}{2}$, this is an easy consequence of the following facts: every resonance $s$ in the half-plane $\mathrm{Re}(s) > \frac{1}{2}$ is a real number in the interval $[\frac{1}{2}, \delta]$, it corresponds to an $L^2$-eigenvalue $\lambda = s(1-s)$, and there are only finitely many such eigenvalues. In the alternative case $\delta \leqslant \frac{1}{2}$, Naud's result is far less trivial because there are infinitely many resonances in the half-plane $\mathrm{Re}(s) < \frac{1}{2}$ (as established in \cite{GuiZwor2}), none of which is connected to an $L^2$-eigenvalue.

Spectral gaps have a long history and numerous applications. In the context of infinite-area hyperbolic surfaces, these applications include precise asymptotics of the length spectrum \cite{Naud_asymptotics}, asymptotic of waves \cite{Naud_Guillamou}, as well as arithmetic applications in conjunction with the affine sieve \cite{BGS}.

\subsection{Statement of results}
Let $X'$ be a finite cover of $X$, i.e., a finite-degree or finite-sheeted cover. Any resonance for $X$ is also one for $X'$. This follows directly from the fact that resonances occur as zeros of the Selberg zeta function and from the Venkov--Zograf formula, see §\ref{sec:VZ}. Moreover, the limit sets of $X'$ and $X$ have equal Hausdorff dimensions. In particular, the critical exponent $s=\delta$ is a common resonance for all finite covers of $X$ and it is the one with the largest real part. Following Magee--Naud \cite{NaudMagee}, we say that a resonance for $X'$ is ``new'' with respect to the base surface $X$ if it occurs with greater multiplicity than in $X$. In this view, we can define the \textit{relative} spectral gap of a covering $X'\to X$ by
\begin{equation}\label{def:relgap}
\mathrm{gap}(X',X) = \inf_{\text{$ \zeta $ new}} (\delta-\mathrm{Re}(\zeta)),
\end{equation}
where the infimum is taken over the new resonances of $X'$ with respect to $X$. Clearly, we have
$$
\mathrm{gap}(X') = \min\{ \mathrm{gap}(X',X), \mathrm{gap}(X) \}.
$$

We now focus on the case where $X$ is an infinite-area, convex cocompact hyperbolic surface. Thanks to a result of Button \cite{Button}, any such surface is isometric to a quotient $\Gamma\backslash\mathbb{H}^2$ where $\Gamma$ is a so-called \textit{Schottky group}. Schottky groups stand out, among other Fuchsian groups, by their simple geometric construction, which we recall below in §\ref{Schottky_groups}. For the rest of this introduction,
\begin{itemize}
\item $\Gamma$ is a non-elementary\footnote{this is equivalent with $\Gamma$ having at least $m\geqslant 2$ Schottky generators} Schottky group with Schottky generating set $S = \{ \gamma_1,\dots, \gamma_m\}$,
\item $(\Gamma_n)_{n\in I}$, $I\subseteq \mathbb{N}$ is a family of finite-index subgroups of $\Gamma$,
\item we put $X=\Gamma\backslash\mathbb{H}^2$ and $X_n = \Gamma_n\backslash\mathbb{H}^2$, and
\item we let $(\mathcal{G}_n)_{n\in I}$ be the associated family of Schreier coset graphs with respect to the generating set $S$.
\end{itemize}

\begin{theorem}[Main Theorem]\label{main_theorem}
Let notations and assumptions be as above. Suppose the graphs $\mathcal{G}_n$ form a family of two-sided $ \epsilon $-expanders for some $\epsilon > 0$. Then there exists some $\eta = \eta(\epsilon,\Gamma) > 0$ such that
$$
\inf_{n\in I} \mathrm{gap}(X_n,X)\geqslant \eta. 
$$
Moreover, we may take $\eta = \exp(-c\epsilon^{-1})$ for some $c=c(\Gamma)>0.$
\end{theorem}

A few remarks are in order.

\begin{remark}
If $X$ has critical exponent $\delta$ larger than $\frac{1}{2}$, Theorem \ref{main_theorem} is easily deduced from Theorem 1.2 of Bourgain--Gamburd--Sarnak \cite{BGS} (albeit without the dependence of $\eta$ on $\epsilon$). However, their theorem is a statement only about eigenvalues. As such, it tells us nothing about the case $\delta \leqslant \frac{1}{2}$ since resonances in the half-plane $\mathrm{Re}(s)\leqslant \frac{1}{2}$ are not connected to eigenvalues. Moreover, the methods in \cite{BGS} do not easily generalize to our setting and we are forced to devise our own proof strategy.
\end{remark}

\begin{remark}\label{rem:secrem}
We know from Beardon \cite{Beardon_exponent1,Beardon_exponent2} that the condition $\delta \leqslant \frac{1}{2}$ excludes cusps. In other words, any hyperbolic surface with $\delta\leqslant\frac{1}{2}$ must be convex cocompact, so this case automatically comes under the purview of Theorem \ref{main_theorem}.
\end{remark}

\begin{remark}
Theorem \ref{main_theorem} fails if $\Gamma$ is elementary, i.e., when $m=1$. In this case $\Gamma$ is generated by a single hyperbolic element $\gamma$ and $X$ is a hyperbolic cylinder. The critical exponent of $X$ is equal to zero and $\mathcal{R}_X$ is equal to the half-lattice $(2\pi i /\ell)\mathbb{Z}-\mathbb{N}_0$, where $\ell >0$ is the displacement length of $\gamma$. Thus, for all elementary $X$ we have $\mathrm{gap}(X)=0$. One can verify easily that any finite cover $X'$ of an elementary surface $X$ is again elementary and therefore $\mathrm{gap}(X')=0$ as well. In particular, this shows that the conclusion of Theorem \ref{main_theorem} is false in the elementary case. 
\end{remark}

A quotient $X' = \Gamma'\backslash \mathbb{H}^2$ is said to be a \textit{regular} covering of $X = \Gamma\backslash \mathbb{H}^2$ if $\Gamma'$ is a \textit{normal} subgroup of $\Gamma$. In this case, the quotient $G \defeq \Gamma/\Gamma'$ forms a group, the so-called \textit{covering group}, and the Schreier coset graph $\mathcal{G}$ is the Cayley graph $\mathrm{Cay}(G,\tilde{S})$ of $G$ with respect to the image $\tilde{S}$ of $S$ under the natural projection map $\Gamma\to G$.

It is well-known that a connected, regular graph is bipartite if and only if its normalized adjacency matrix has $-1$ as an eigenvalue. For Cayley graphs we know much more from \cite[Appendix E]{BGGT}: if a Cayley graph is a (one-sided) $\epsilon$-expander graph for some $\epsilon > 0$, and is not bipartite, then it is a two-sided $\epsilon'$-expander for some $\epsilon' > 0$ depending solely upon $ \epsilon$ and $\vert S\vert$. This immediately leads to the following:

\begin{cor}\label{cor1}
Let notations and assumptions be as above. Assume additionally that $(X_n)_{n\in I}$ is a family of regular coverings such that each $\mathcal{G}_n$ is non-bipartite. Then the following holds true. If $\mathcal{G}_n$ form a family of (one-sided) expanders, there exists some $\eta>0$ such that $\mathrm{gap}(X_n)\geqslant \eta $ for all $n\in I.$
\end{cor}

This corollary can also be rephrased in terms of Property ($\tau$), see §\ref{sec:Schreier}.

\begin{cor}
Assumptions being as in Corollary \ref{cor1}, the following holds true. If $\Gamma$ has Property ($\tau$) with respect to the family $(\Gamma_n)_{n\in I}$, there exists some $\eta>0$ such that $\mathrm{gap}(X_n)\geqslant \eta $ for all $n\in I.$
\end{cor}

\begin{remark}
There are several conditions that guarantee a Cayley graph $\mathrm{Cay}(G,S)$ to be non-bipartite. Here are three of them:
\begin{enumerate}[(1)]
\item \label{nb:part1} There exists no surjective group homomorphism $\xi \colon G \to \{ \pm 1 \}$ with the property that $\xi(s) = -1$ for all $s\in S$. In fact, this is equivalent with $\mathrm{Cay}(G,S)$ being non-bipartite, see \cite[Proposition 2.3.6]{KowalskiExp}. 
\item \label{nb:part2} $S$ contains the identity $e_G$ of $G$, or
\item \label{nb:part3} $G$ has no non-trivial, one-dimensional representation.
\end{enumerate}
One can show easily that Properties \eqref{nb:part2} and \eqref{nb:part3} follow from the first one.
\end{remark}

Our next result deals with ``universal'' resonance-free regions for $X$, by which we mean regions $\mathcal{U}\subset \mathbb{C}$ with the property that for any finite-degree covering $X'\to X$ there are \textit{no} resonances for $X'$ inside $\mathcal{U}$. Note that the half-plane $\mathcal{U} = \{ \mathrm{Re}(s) > \delta \}$ trivially satisfies this property since the critical exponent $\delta$ does not change by passing to finite covers. Recently, Magee--Naud \cite{NaudMagee} proved that there are $\eta_\infty > 0$ and $T > 0$, depending only on $X$, such that
\begin{equation}
\mathcal{U} = \{ s = \sigma + it : \text{$\sigma > \delta -\eta_\infty$ and $\vert t\vert > T$} \}
\end{equation}
is a universal resonance-free region for $X$. Our next theorem strengthens this result.
\begin{theorem}[Universal resonance-free region]\label{thm:dist_zero_free}
For every non-elementary, geometrically finite hyperbolic surface $X$ with critical exponent $\delta$ there exists a constant $\eta_\infty > 0$ and a function $\eta\colon \mathbb{R} \to \mathbb{R} $ satisfying
\begin{itemize}
\item $\eta(0) = 0$,
\item $\eta(t) > 0$ for $t\neq 0$, and
\item $\lim_{\vert t\vert \to \infty} \eta(t) = \eta_\infty$
\end{itemize}
such that
\begin{equation}\label{region_W}
\mathcal{U} = \{ s = \sigma + it \in \mathbb{C} : \sigma > \delta -\eta(t) \}
\end{equation}
is a universal resonance-free region for $X$. Moreover, we may take
$$
\eta(t) = \eta_\infty \frac{\xi(t) }{1+\xi(t)}
$$
with $\xi(t) = \frac{\vert t\vert^2}{\log \left(  C + \frac{1}{\vert t\vert} \right)}$ for some constant $C>1$ depending only on $X$.
\end{theorem}

\begin{figure}[h]
\centering
\captionsetup{justification=centering}
\begin{tikzpicture}[xscale=1.3, yscale=1.3]
\draw [->] (-0.5,0) -- (3.05,0) node [right, font=\tiny]  {$\mathrm{Re}$};

\draw [->] (0,-3) -- (0,3) node [right, font=\tiny]  {$ \mathrm{Im}$};

\draw[dashed] (2,-3) -- (2,3);

\path[name path = A] (3,-3) -- (3,3);

\filldraw (2,0) circle (0.7pt) node[below right, font=\tiny] {$s=\delta$};

\draw[name path = B, scale=1, domain=-3:3, smooth, variable=\x] plot ({2 - 0.35*\x*\x*\x*\x/(\x*\x*\x*\x + 0.3)}, {\x});

\draw[dashed] (2-0.4,-3) -- (2-0.4,3);

\node [font=\tiny] at (2-0.55,3.1) {$\mathrm{Re}(s)=\delta-\eta_\infty$};

\node [font=\tiny] at (2+0.5,2.7) {$\mathrm{Re}(s)=\delta$};

\tikzfillbetween[of=A and B]{gray,opacity=0.2};


\node [font=\tiny] at (0.3,0.3) {$W$};
\end{tikzpicture}
\caption{Gray area: universal resonance-free region $\mathcal{U}$ of Theorem \ref{thm:dist_zero_free}}
\end{figure}

To gain a sense of Theorem \ref{thm:dist_zero_free} observe that any sequence $(s_n)\subset \CC \setminus \mathcal{U}$, with $\mathcal{U}$ as in \eqref{region_W}, satisfies
$$
\lim_{n\to \infty} \mathrm{Re}(s_n) = \delta \Longrightarrow \lim_{n\to \infty} s_n = \delta.
$$
This immediately implies the following:
\begin{cor}
Let $(X_n)_{n\in \mathbb{N}}$ be a family of finite covers of $X$ with $X$ is as in Theorem \ref{thm:dist_zero_free}. For each $n \in \mathbb{N}$ pick a resonance $s_n$ of $X_n$. If the sequence $(s_n)$ converges to the vertical line $\mathrm{Re}(s)=\delta$ as $n\to \infty$, then $(s_n)$ converges to $\delta$ as $n\to \infty$.
\end{cor}

It is important to emphasize that Theorem \ref{thm:dist_zero_free} is not just an isolated part of this paper. This theorem (or rather a quantitative version thereof, see Corollary \ref{cor:dist_zero_free}) will be crucial to establish our main Theorem \ref{main_theorem}.

We should also remark that the constants in Theorems \ref{main_theorem} and \ref{thm:dist_zero_free} are hardly explicit, due to their reliance on the thermodynamic formalism for twisted Selberg zeta functions. To compute these constants effectively, one would require effective versions of two key ingredients from thermodynamic formalism used in this paper: Proposition \ref{Ruelle_Perron_Frobenius} (Ruelle--Perron--Frobenius Theorem) and Proposition \ref{prop:MN_input} (uniform Dolgopyat estimate). Unfortunately, the author is not aware of effective versions of these results.

\subsection{Congruence subgroups}
Theorem \ref{main_theorem} may be used to establish a uniform resonance-free strip for the family of congruence covers. When $\Gamma$ is a subgroup in $\mathrm{SL}_2(\mathbb{Z})$ and $q\in \mathbb{N}$, the (principal) congruence subgroup of $\Gamma$ of level $q$ is defined by
\begin{equation}\label{defiCongrunceGroup}
\Gamma(q) \defeq \left\{ \text{$\gamma\in \Gamma$ : $\gamma\equiv I$ mod $q$} \right\}.
\end{equation}
Equivalently, $\Gamma(q) $ is the kernel of the reduction map modulo $q$,
\begin{equation*}
\pi_q \colon \Gamma \to \mathrm{SL}_2(\mathbb{Z}/q\mathbb{Z}), \quad \gamma\mapsto \text{$\gamma$ mod $q$.}
\end{equation*}
We denote the associated congruence cover by $X(q)= \Gamma(q)\backslash \mathbb{H}^2.$ When $X$ is convex cocompact, the existence of a uniform spectral gap for $X(q)$ was first proven by Oh--Winter \cite{OhWinter} for $q$ running through the positive square-free integers, and later by Bourgain--Kontorovich--Magee \cite{BKM} for $q$ running over all positive integers with no small prime divisors. 

In both papers \cite{OhWinter,BKM}, one of the main ingredients is the deep result of Bourgain--Varj\'{u} \cite{Bourgain_Varju}, which implies that there is some $\epsilon = \epsilon(\Gamma) > 0$ with following property: for every $q\in \mathbb{N},$ the Cayley graph
$$
\mathcal{G}_q = \mathrm{Cay}(G_q, \pi_q(S)), \quad G_q = \pi_{q}(\Gamma),
$$
is an $\epsilon$-expander. Moreover, there exists an integer $q_0 = q_0(\Gamma)$ such that for all $q\in \mathbb{N}$ co-prime to $q_0$ we have $G_q = \mathrm{SL}_2(\mathbb{Z}/q\mathbb{Z}).$ Combining this result with our main Theorem \ref{main_theorem} \textit{immediately} yields the following:

\begin{theorem}[Resonance-free strip for $X(q)$]\label{congruence_res}
Let $\Gamma \subset \mathrm{SL}_2(\mathbb{Z})$ be a non-elementary, convex cocompact group. Then there exists $\eta>0$ and $q_0\in \mathbb{N}$, depending only on $\Gamma$, such that for all $q\in \mathbb{N}$ with $(q,q_0)=1$ the congruence covers $X(q) = \Gamma(q)\backslash\mathbb{H}^2$ have no new resonances in the half-plane $\mathrm{Re}(s)\geqslant \delta - \eta.$
\end{theorem}

Special cases of Theorem \ref{congruence_res} with explicit spectral gaps have been known for a long time. Selberg's famous $\frac{3}{16}$-theorem \cite{Selberg_3/16} says that for the modular group $\Gamma=\mathrm{SL}_2(\mathbb{Z})$ (in which case $\delta = 1$) we can take $\eta = \frac{1}{4}$. This is equivalent with the first non-zero eigenvalue of $X(q)$ being bounded from below by $\lambda_1(X(q)) \geqslant \frac{3}{16}$ for any $q\in \mathbb{N}$. Selberg himself conjectured that the same should hold true with $\eta = \frac{1}{2}$ (which is equivalent with saying that $\lambda_1(X(q)) \geqslant \frac{1}{4}$). Although this conjecture remains open to date, several quantitative improvements have been made \cite{Iwaniec1990,LRS,Iwaniec96,KimShahidi,HKSar}. We refer to the expository articles of Sarnak \cite{Sar95,Sar05}. The first spectral gap in the infinite-area setting was established by Gamburd \cite{Gamburd1}, who proved that for all finitely generated $\Gamma \subset \mathrm{SL}_2(\mathbb{Z})$ with $\delta>\frac{5}{6}$ and for any sufficiently large prime $q$ one can take $\eta = \delta - \frac{5}{6}$ in Theorem \ref{congruence_res}.

Apart from the intrinsic interest of this problem, finding explicit spectral gaps for congruence covers has many applications to concrete arithmetic questions, see for instance \cite{BGS,Kon09, BK10, KO12, HK15, Ehr19}. Selberg's conjecture (when extended to the infinite-area setting) suggests that the optimal gap should be $\eta=\delta/2$, but this seems out of reach given the current state of the art. Good bounds for $\eta$ in Theorem \ref{congruence_res} are difficult to obtain, mainly due to the reliance on the expander theory developed by Bourgain--Varj\'{u} \cite{Bourgain_Varju} and Bourgain--Gamburd \cite{Bourgain_Gamburd_2} among others. These papers do not provide explicit bounds for the expansion coefficients of the Cayley graphs $\mathcal{G}_q = \mathrm{Cay}(G_q, \pi_q(S)).$ It should be noted that Kowalski \cite{Kowalski_explicit_2012} has obtained an explicit bound for the spectral gap of the Laplacian on $\mathcal{G}_q$ when $q$ is prime. However, according to Kowalski himself, these bounds are far from being sharp. 

Another source of non-explicitness in Theorem \ref{congruence_res} is the fact that our proof of Theorem \ref{main_theorem} (from which Theorem \ref{congruence_res} follows) requires Theorem \ref{thm:dist_zero_free}, which in turn relies on ingredients from thermodynamic formalism with hardly explicit constants.

It is worth noting that the proofs of Oh--Winter \cite{OhWinter} and Bourgain--Kontorovich--Magee \cite{BKM} rely on two fundamental ingredients. Firstly, for sufficiently large primes $p$, they rely on the fact that the quotient group $\Gamma/\Gamma(p)$ is isomorphic to $\mathrm{SL}_2(\mathbb{Z}/p\mathbb{Z})$. Secondly, they make use the following classical result due to Frobenius: the dimension of any non-trivial representation of $\mathrm{SL}_2(\mathbb{Z}/p\mathbb{Z})$ is at least $\frac{p-1}{2}$, which is large when compared to the group order $\vert \mathrm{SL}_2(\mathbb{Z}/p\mathbb{Z})\vert \asymp p^3$. In the terminology introduced by Gowers \cite{Gow08}, the latter means that $\mathrm{SL}_2(\mathbb{Z}/p\mathbb{Z})$ is a highly \textit{quasirandom} group. Our proof of Theorem \ref{main_theorem} (and hence Theorem \ref{congruence_res}) does not require such specific information on the covering group. In fact, Theorem \ref{main_theorem} does not even require the fundamental group of the coverings to be normal subgroups of $\Gamma$. Nonetheless, our proofs bears certain resemblances to \cite{OhWinter,BKM}, such as the usage of the thermodynamic formalism.

We also point out that Theorem \ref{congruence_res} can be extended (in slightly weaker form) to congruence groups coming from arbitrary number fields. This follows directly from Theorem \ref{main_theorem} and the result of Varj\'{u} \cite{Va2012}, which says that for the ring of integers $\mathcal{O}$ of an arbitrary number field, the Cayley graphs of $\mathrm{SL}_2(\mathcal{O}/\mathcal{I})$, with respect to the projections of $S$, form an expander family when $\mathcal{I}$ ranges over square-free ideals of $\mathcal{O}$. (A careful inspection of \cite{Va2012} reveals that this is actually a family of two-sided expanders.) Thus we have the following:

\begin{theorem}\label{thm:varjuconsequence}
Let $\mathcal{O}$ be the ring of integers in a totally real number field and let $\Gamma \subset \mathrm{SL}_2(\mathcal{O})$ be a convex cocompact Fuchsian group. Given an ideal $\mathfrak{q}\subseteq \mathcal{O}$ let $\Gamma(\mathfrak{q})$ be the principal congruence subgroup defined analogously to \eqref{defiCongrunceGroup}. Then there exists a constant $\eta = \eta(\Gamma)>0$ such that $X(\mathfrak{q}) = \Gamma(\mathfrak{q})\backslash \mathbb{H}^2$ has no new resonances in $\mathrm{Re}(s)\geqslant \delta-\eta$, where $\mathfrak{q}\subseteq \mathcal{O}$ ranges over square-free ideals of $\mathcal{O}$ with no small prime factors.
\end{theorem}

The restriction to square-free ideals comes from the main theorem in \cite{Va2012}. The author is not aware whether this restriction can be removed or not. 

At this point the reader may wonder if convex cocompact Fuchsian groups $\Gamma \subset \mathrm{SL}_2(\mathcal{O})$ where $\mathcal{O}$ is an in Theorem \ref{thm:varjuconsequence} actually exist (because if not, Theorem \ref{thm:varjuconsequence} is just a vacuous statement). Fortunately, such groups are actually ubiquitous, as observed in \cite{BKM}: for any pair of hyperbolic elements $\gamma_1, \gamma_2 \in \mathrm{SL}_2(\mathcal{O})$ with no common fixed points on $\overline{\mathbb{R}}$,  $\gamma_1^k, \gamma_2^k$ forms a pair of Schottky generators for all $k$ sufficiently large. In particular, $\Gamma = \langle \gamma_1^k, \gamma_2^k\rangle$ is a convex cocompact subgroup of $\mathrm{SL}_2(\mathcal{O})$.

\subsection{Further explicit constructions}
We now give more examples of families of covers with uniform spectral gap other than those coming from congruence subgroups. Consider the following recipe:

\begin{enumerate}[(1)]
\item Let $G$ be a finite group and let $g_1, \dots, g_m \in G$ be elements that generate $G$.
\item Let $\pi \colon \Gamma \to G$ be the homomorphism defined by $\pi(\gamma_i) = g_i$ for all $i\in \{ 1,\dots, m \}$. Note that $\pi \colon \Gamma \to G$ is surjective since we assume that $g_1, \dots, g_m$ generate the whole group $G$.
\item Finally, let $\Gamma'=\ker(\pi)$ be the kernel of $\pi \colon \Gamma \to G$. By construction, $X'=\Gamma'\backslash\mathbb{H}^2$ is a \textit{$G$-cover} of $X=\Gamma\backslash\mathbb{H}^2$, that is, a finite regular cover whose covering group is isomorphic to $\Gamma'/\Gamma \cong G$. Moreover, the Schreier coset graph associated to this covering is the Cayley graph
$$
\mathcal{G} = \mathrm{Cay}(G, \{ g_1^{\pm}, \dots, g_m^{\pm} \}).
$$
Thanks to Theorem \ref{main_theorem}, if $\mathcal{G}$ is a two-sided $\epsilon$-expander for some $\epsilon>0$, there exists $\eta=\eta(\epsilon,X)>0$ for which $
\mathrm{gap}(X',X)\geqslant \eta$, where $\mathrm{gap}(X',X)$ is the relative spectral gap defined in \eqref{def:relgap}.
\end{enumerate}

Hence, by this recipe, in order to manufacture coverings with uniform spectral gap, it suffices to find families of finite groups $G$ and corresponding generating elements $\{ g_1, \dots, g_m \}\subset G$ such that the resulting Cayley graphs $\mathcal{G}$ are two-sided $\epsilon$-expanders for some uniform $\epsilon > 0.$ It turns out that examples for such families are easy to find in the literature. Let us briefly mention a few of these examples along with their respective implications:

\begin{enumerate}[(1)]
\item By \cite{KLN,BGT2} we know that for every finite simple group $G$ and for every $m\geqslant m_0$ there are elements $g_1,\dots g_m$ in $G$ with the property that the Cayley graph $\mathcal{G}$ is a two-sided $\epsilon$-expander, with $m_0$ and $\epsilon>0$ independent of the group. Note that the number $m$ of Schottky generators for $X$ is related to Euler characteristic by $\chi(X)=1-m$. Hence, by Theorem \ref{main_theorem} it follows that every convex cocompact hyperbolic surface $X$ with negative enough Euler characteristic and for every finite simple group $G$, there exists a $G$-cover $X'$ of $X$ for which $\mathrm{gap}(X',X)\geqslant \eta$, with $\eta > 0$ depending only on $X$.

\item By the main result in \cite{BGGT} (see also references therein) every \textit{simple group of Lie type} $G$ has the property that if we choose elements $g_1, \dots, g_m\in G$ independently and uniformly at random, then, with probability tending to $1$ as $\vert G\vert \to \infty$, these elements generate the whole group $G$ and the Cayley graph $\mathcal{G}$ is a two-sided $\epsilon$-expander for some fixed $\epsilon$. Combining this with Theorem \ref{main_theorem} it follows that for every convex cocompact hyperbolic surface $X$, almost every $G$-cover $X'$ of $X$ has $\mathrm{gap}(X',X)\geqslant \eta$ for some $\eta > 0$ depending only on $X.$ This should be compared with \cite[Corollary 1.1]{NaudMagee}, which is the same statement with ``$G$-cover'' replaced by ``cover of degree $n$'' ($n$ large).

\item By \cite[Theorem 8.3]{BGGT} the statement of the previous item also holds true for direct products $G = G_1 \times\cdots \times G_k$, where $G_1, \dots, G_k$ are simple groups of Lie type.

\item By \cite{BrGa} there exists a density one subset $\mathcal{P}$ of the primes with the following property: for every generating set of $\mathrm{SL}(\mathbb{Z}/p\mathbb{Z})$ with $p\in \mathcal{P},$ the corresponding Cayley graph is a two-sided $\epsilon$-expander where $\epsilon > 0$ is some absolute constant. By Theorem \ref{main_theorem} it follows that for all $p\in \mathcal{P}$, every $\mathrm{SL}(\mathbb{Z}/p\mathbb{Z})$-covering $X'$ of $X$ satisfies $\mathrm{gap}(X',X)\geqslant \eta$ where $\eta > 0$ depends only on $X.$
\end{enumerate}

\subsection{Outline of the proofs}\label{sec:onTheProofs}
Let us now outline the general strategy leading to the proofs of Theorems \ref{main_theorem} and \ref{thm:dist_zero_free}. Throughout, we take advantage of the following two facts. First, the resonances of the surface $X = \Gamma\backslash\mathbb{H}^2$ correspond to the zeros of the corresponding Selberg zeta function, denoted by $Z_{\Gamma}(s)$. Secondly, given a finite cover $X' \to X$ we can assume that the fundamental group $\Gamma'$ of $X'$ is a finite-index subgroup of $\Gamma$. In this case, the resonances of $X'$ occur as zeros of the ``twisted'' Selberg zeta function $Z_\Gamma(s,\lambda_{\Gamma/\Gamma'})$, where $\lambda_{\Gamma/\Gamma'}$ is the induced representation of the identity on $\Gamma'$ to $\Gamma$, see §\ref{sec:VZ}. To make the connection with expanders we notice first that the adjacency operator of the Schreier coset graph can be expressed easily in terms of $\lambda_{\Gamma/\Gamma'}$, see §\ref{sec:Schreier}. Being an expander graph thus becomes a property of the induced representation. Therefore, it is natural to reformulate the problem more generally in terms of the zeros of twisted Selberg zeta functions $Z_\Gamma(s,\rho)$ for general finite-dimensional representations $\rho$ of $\Gamma$. In this view,

\begin{enumerate}[(1)]
\item our main Theorem \ref{main_theorem} is a special case of Theorem \ref{thm:second_part} and 
\item Theorem \ref{thm:dist_zero_free} is a special case of Theorem \ref{thm:dist_zero_free:reformul}.
\end{enumerate}

We also exploit the fact that for Schottky groups $\Gamma$, $Z_\Gamma(s,\rho)$ can be realized as the Fredholm determinant of a family of \textit{transfer operators} $\mathcal{L}_{s,\rho}$, $s\in \CC$, acting on a functional space consisting of $V$-valued functions on a neighbourhood $I\subset \mathbb{R}$ of the limit set $\Lambda$. More details will be given in §\ref{sec:transfer_operator}. Any zero $s=\sigma+it$ of $Z_\Gamma(s,\rho)$ thus corresponds to a $1$-eigenfunction of the operator $\mathcal{L}_{s,\rho}$, i.e., there exists some non-zero $F\colon I\to V$ satisfying
$$
\mathcal{L}_{s,\rho}^N F = F.
$$
for all $N\in \mathbb{N}$. This can be restated as follows: there exists some non-zero $f \colon I \to V$ satisfying
\begin{equation}\label{outline:norm}
\widetilde{\mathcal{L}}_{s,\rho}^N f= e^{-NP(\sigma)}f,
\end{equation}
where $\widetilde{\mathcal{L}}_{s,\rho}$ denotes the \textit{normalized} transfer operator, and $P$ is the \textit{topological pressure} associated to the natural dynamical system induced by the action of $\Gamma$ on the union of Schottky intervals $I$. In §\ref{sec:toppressure}, details pertaining to the thermodynamic formalism, including the topological pressure, will be provided. The normalized transfer operator will be introduced in §\ref{sec:norm_to}.

\subsubsection{Outline of proof of Theorem \ref{thm:dist_zero_free}}
Let us first describe the proof of Theorem \ref{thm:dist_zero_free}. Assume that $s=\sigma + it$ is a resonance for a finite cover of $X'=\Gamma'\backslash \mathbb{H}^2$. Hence, $s=\sigma + it$ must be a zero of $Z_{\Gamma}(s,\rho)$ where $\rho$ is the induced representation of $\Gamma'/\Gamma$ and consequently, there exists a non-zero $f\colon I\to V$ satisfying \eqref{outline:norm}. When $\vert t\vert$ is large we use a high-frequency norm estimate (Proposition \ref{prop:hf_norm_estimate}). This estimate, which follows essentially from Magee--Naud \cite{NaudMagee}, tells us that for all $\vert t\vert \gg 1$ and for $N\approx \log \vert t\vert$
$$
\Vert \widetilde{\mathcal{L}}_{s,\rho}^N f\Vert_{C^1(I)} \leqslant \frac{1}{\vert t\vert} \Vert f\Vert_{C^1(I)}.
$$
Combining this with \eqref{outline:norm} yields the desired contradiction, provided $\vert t\vert$ is large enough and $\sigma$ is sufficiently close to $\delta$. Now we assume alternatively that $\vert t\vert \ll 1$ is bounded from above. In this case we apply the \textit{tensor power trick} (see §\ref{sec:tensorProductTrick}), which can be described informally as follows: for some $k\in \mathbb{N}$, let $h=f^{\otimes k}$ be the $k$-th tensor power of $f$ and let $\nu = \rho^{\otimes k}$ be the $k$-th tensor power representation of $\rho$. Then, letting $x\in I$ be a maximizer of the function $x\mapsto\Vert f(x)\Vert_V$, one can show that
\begin{equation}\label{outline:approxtensor}
\widetilde{\mathcal{L}}_{\sigma + itk,\nu}^N h(x) \approx e^{-NP(\sigma)}h(x).
\end{equation}
Here, the symbol $\approx$ means that the difference of both sides is small measured in the norm of $V^{\otimes k}$, provided $\sigma$ is close to $\delta$ and $k$ is fixed. But now (assuming $t\neq 0$) we take $k$ so that $k\vert t\vert \gg 1$ is large enough. We can once again apply the high-frequency estimate. This allows us to obtain a contradiction to \eqref{outline:approxtensor}, provided $\sigma$ is sufficiently close to $\delta$ in terms of $t$. Pursuing this analysis carefully yields a contradiction for all $\sigma\geqslant \delta-\eta(t)$, where $\eta \colon \mathbb{R}\to \mathbb{R}$ is the function given in Theorem \ref{thm:dist_zero_free}.

\subsubsection{Outline of proof of main theorem}
Let us now turn our attention to Theorem \ref{main_theorem}. Some new notations are needed. The group $\Gamma$ is freely generated by the Schottky generating set $\gamma_1, \dots, \gamma_m$ and their inverses, which we denote by
\begin{equation*}
\text{$\gamma_{m+1} = \gamma_1^{-1}$, $\gamma_{m+2} = \gamma_2^{-1},$ ... , $ \gamma_{2m} = \gamma_{m}^{-1}$.}
\end{equation*}
Thus the elements of $\Gamma$ can be indexed by reduced words $\textbf{a} = a_1 \cdots a_N$ with letters $a_1, \dots , a_N$ in the alphabet $[2m] = \{ 1,\dots,2m \}$. Given such a word we write
\begin{equation*}
\gamma_{\textbf{a}} \defeq \gamma_{a_1}\circ \cdots \circ \gamma_{a_N}.
\end{equation*}
We denote by $W_N^j$ the set of reduced words of length $N$ not ending with the letter $j.$ 

Now fix a subgroup $\Gamma'\subset\Gamma$ and assume that the associated Schreier graph is a two-sided $\epsilon$-expander for some $\epsilon> 0$. Let $s=\sigma+it$ be a \textit{new} resonance for $X' = \Gamma'\backslash \mathbb{H}^2$. Then $s=\sigma+it$ is a zero of $Z_{\Gamma}(s,\rho)$ where $\rho$ is the induced representation of $\Gamma/\Gamma'$ \textit{minus} the identity. Once again, this zero corresponds to a solution $f$ of equation \eqref{outline:norm}. Suppose that $\sigma \geqslant \delta - \eta$. We seek to obtain a contradiction when $\eta>0$ is sufficiently small in terms of $\epsilon$. Let $x\in I$ be a maximizer of $x \mapsto\Vert f(x)\Vert_V$. Using Proposition \ref{prop:strict_convexity}, which is essentially a strict convexity argument, we deduce that there exists some $j\in [2m]$ such that for all $\textbf{a}\in W_N^j$,
\begin{equation}\label{outline:strconv}
f(x) \approx \gamma_{\textbf{a}}'(x)^{it} \rho(\gamma_{\textbf{a}})^{-1}  f(\gamma_{\textbf{a}}(x)).
\end{equation}
Here we write $\approx$ to mean that the difference of both sides (measured in the $V$-norm) is small as $\eta \to 0^+$. Now we invoke Corollary \ref{cor:dist_zero_free} (a quantitative version of Theorem \ref{thm:dist_zero_free}). It says that if $s=\sigma+it$ is a resonance for $X'$ with real part $\sigma$ close enough to $\delta$, then we must have $t \approx 0$. It follows that $\gamma_{\textbf{a}}'(x)^{it} \approx 1 $, so \eqref{outline:strconv} becomes
\begin{equation}\label{outline:approx}
f(x) \approx \rho(\gamma_{\textbf{a}})^{-1}  f(\gamma_{\textbf{a}}(x)).
\end{equation}
Now we take the average of this equation over all words $\textbf{a}\in W_N^j$:
$$
f(x) \approx \mathbb{E}_{\textbf{a}\in W_N^j} \left( \rho(\gamma_{\textbf{a}})^{-1}  f(\gamma_{\textbf{a}}(x)) \right).
$$
Finally, we apply the estimate from Proposition \ref{prop:exp_dec_average}, which is precisely where we appeal to the expansion property. Roughly speaking, this estimate tells us that
$$
\left\Vert \mathbb{E}_{\textbf{a}\in W_N^j} \left( \rho(\gamma_{\textbf{a}})^{-1}  f(\gamma_{\textbf{a}}(x)) \right)\right\Vert \ll e^{-c\epsilon N} \Vert f\Vert_\infty.
$$
Combining this with \eqref{outline:approx} yields
$$
\Vert f\Vert_{\infty} = \Vert f(x)\Vert_V  \ll e^{-c\epsilon N} \Vert f\Vert_\infty
$$
and we now obtain the desired contradiction once $N$ is large enough.

It should be noted that the actual proofs deviate considerably from the description given here. Let us mention two aspects omitted from this exposition. First, we clearly need to replace the ambiguous symbol $\approx$ by actual estimates. Second, we need to control the size of the derivative $f'$ in terms of the size of $f$. This is dealt with in Lemma \ref{estimate_der_eigenfct}.

\subsection{Structure of the paper} The rest of this paper is structured as follows.
In §\ref{sec:preliminaries} we provide some background on twisted Selberg zeta functions, transfer operators, the Venkov--Zograf induction formula, Schreier graphs, and Schottky groups.

In §\ref{sec:dist_zero_free} we prove Theorem \ref{thm:dist_zero_free:reformul}, the generalized version of Theorem \ref{thm:dist_zero_free}.

In §\ref{sec:exploiting_expander} we exploit the expansion property to derive estimates for certain twisted sums over reduced words. The main estimate here is Proposition \ref{prop:exp_dec_average}.

In §\ref{sec:part2mainthm} we proof Theorem \ref{thm:second_part}, the generalized version of Theorem \ref{main_theorem}.

\subsection{Notation}\label{sec:notation}

\begin{itemize}
\item We write $f(x)\ll g(x)$ or $ f(x) = O(g(x)) $ interchangeably to mean that there exists an implied constant $C>0$ for which $\vert f(x)\vert \leqslant C \vert g(x)\vert$ for all $x\geqslant C.$ We write $C=C(a,b,\dots)$ to emphasize that $C$ depends on $a, b, \dots$.

\item We write $s=\sigma + it$ to mean that $s$ is a complex number with real part equal to $\sigma$ and imaginary part equal to $t$.

\item Given a finite set $A$ we denote by $\vert A\vert$ its cardinality and we write $\mathbb{E}_{a\in A} f(a)$ for the average
$$
\mathbb{E}_{a\in A} f(a) \defeq \frac{1}{\vert A\vert} \sum_{a\in A} f(a).
$$

\item Given $N\in \mathbb{N}$ we write $[N] = \{ 1,\dots, N \}.$
\end{itemize}

\subsection{Acknowledgments} I would like to thank Michael Magee for bringing to my attention his joint work with Jean Bourgain and Alex Kontorovich \cite{BKM}.

\section{Preliminaries}\label{sec:preliminaries}

\subsection{Hyperbolic geometry}\label{sec:hypgeom} We start by reviewing some basic facts about hyperbolic geometry, referring the reader to Borthwick's book \cite{Borthwick_book} for in-depth account of the material presented here. One of the standard models for the hyperbolic plane is the Poincar\'{e} half-plane
$$
\mathbb{H}^2=\{ x+iy\in \C\ :\ y>0\},
$$
endowed with its standard metric of constant curvature $-1$,
$$
ds^2=\frac{dx^2+dy^2}{y^2}.
$$ 
The group of orientation-preserving isometries of $(\mathbb{H}^2, ds)$ is isomorphic to $\mathrm{PSL}_2(\R)$, which acts on $\mathbb{H}^2$ by M\"{o}bius transformations
$$
\gamma=\pmat{a}{b}{c}{d}\in \mathrm{PSL}_2(\R),\quad z\in\mathbb{H}^2 \Longrightarrow  \gamma(z) = \frac{az+b}{cz+d}.
$$
This action extends naturally to the extended complex plane $\overline{\C} = \C \cup \{ \infty\}$. An element $\gamma\in \mathrm{PSL}_2(\R)$ is called 
\begin{itemize}
\item ``hyperbolic'' if $\vert \tr(\gamma)\vert >2$, which implies that $\gamma$ has two distinct fixed points on the boundary $\partial \mathbb{H}^2$,
\item ``parabolic'' if $\vert \tr(\gamma)\vert =2$, which implies that $\gamma$ has precisely one fixed point on $\partial \mathbb{H}^2$, and
\item ``elliptic'' if $\vert \tr(\gamma)\vert < 2$, which implies that $\gamma$ has precisely one fixed point in $\mathbb{H}^2$.
\end{itemize}

Discrete subgroups of $\mathrm{PSL}_2(\R)$ are called ``Fuchsian'' groups. A Fuchsian group is ``torsion-free'' if it contains no elliptic elements. It is called ``convex cocompact'' if it is finitely generated and if it contains neither elliptic nor parabolic elements. This is equivalent to the ``convex core'' of $X=\Gamma\backslash\mathbb{H}^2$ being compact. Every infinite-area, convex cocompact surface is isometric to a quotient $\Gamma\backslash \mathbb{H}^2$ with $\Gamma$ a so-called ``Schottky group'', which we will define in §\ref{Schottky_groups} below.

\subsection{Twisted Selberg zeta functions}
Given a finitely generated Fuchsian group $\Gamma<\mathrm{SL}_2(\R)$, the set of prime periodic geodesics on $X=\Gamma\backslash\mathbb{H}^2$ is bijective to the set $[\Gamma]_{\mathsf{prim}}$ of $\Gamma$-conjugacy classes of the primitive hyperbolic elements in $\Gamma$ (see for instance \cite[Proposition 2.25]{Borthwick_book}). Let $\ell(\gamma)$ denote the length of the geodesic corresponding to the conjugacy class $[\gamma] \in [\Gamma]_{\mathsf{prim}}$.

The Selberg zeta function is defined for $\mathrm{Re}(s) > \delta$ by the infinite product
\begin{equation}\label{selbergZetaDefi}
Z_{\Gamma}(s)\defeq\prod_{k=0}^\infty \prod_{[\gamma] \in [\Gamma]_{\mathsf{prim}}}\left( 1 - e^{-(s+k)\ell(\gamma)}\right).
\end{equation}
It admits a meromorphic continuation to $s\in \mathbb{C}.$ By Patterson--Perry \cite{Patt_Perry} the zero set of $Z_\Gamma(s)$ consists of so-called ``topological zeros'' at $s= -k$ for $k\in \mathbb{N}_0$, and the set of resonances, repeated according to multiplicity. Therefore, any problem about resonances can be rephrased as a question about the distribution of the zeros of the Selberg zeta function.

In this paper, we are interested in ``twisted'' Selberg zeta functions. They are defined for any finite-dimensional, unitary representation $\rho\colon \Gamma\to\mathrm{U}(V)$ by
\begin{equation}\label{def_szf_twisted}
Z_{\Gamma}(s,\rho)\defeq\prod_{k=0}^\infty \prod_{[\gamma] \in [\Gamma]_{\mathsf{prim}}}\det\left( I_V - \rho(\gamma)e^{-(s+k)\ell(\gamma)}\right).
\end{equation}
Note that \eqref{def_szf_twisted} reduces to the classical Selberg zeta function \eqref{selbergZetaDefi} when $\rho$ is taken to be the trivial one-dimensional representation of $\Gamma.$ Moreover, it follows directly from this product definition that we can factorize
\begin{equation}\label{decompFormulaZeta}
Z_\Gamma(s,\rho_1 \oplus \rho_2) = Z_\Gamma(s,\rho_1)Z_\Gamma(s,\rho_2),
\end{equation}
where $\rho_1 \oplus \rho_2$ denotes the orthogonal direct sum of $\rho_1$ and $\rho_2$.

\subsection{Finite covers and Venkov--Zograf formula}\label{sec:VZ} 
Let $\Gamma'$ be a finite-index subgroup of $\Gamma$ and let $X'= \Gamma'\backslash \mathbb{H}^2$ be the corresponding cover of $X$. Let
\begin{equation}
\lambda_{\Gamma/\Gamma'} \defeq \mathrm{Ind}_{\Gamma'}^{\Gamma}(\textbf{1}_{\Gamma'})
\end{equation}
be the induced representation of the trivial one-dimensional representation $\textbf{1}_{\Gamma'}$ on $\Gamma'$ to the larger group $\Gamma.$ The \textit{Venkov--Zograf (induction) formula} asserts that
\begin{equation}\label{VZ_ind_formula}
Z_{\Gamma'}(s) = Z_\Gamma(s,\lambda_{\Gamma/\Gamma'}).
\end{equation}
This was proven by Venkov--Zograf \cite{VenkovZograf} in the case where $\Gamma$ is a cofinite Fuchsian group (see also \cite{Venkov_book}). For an extension of this formula to the non-cofinite case we refer to \cite{FP_szf}.

Let  $n=[\Gamma:\Gamma']$ be the index of $\Gamma'$ in $\Gamma$ and let $g_1, \dots, g_n$ be a full set of representatives in $\Gamma$ of the left cosets in $\Gamma/\Gamma'$. The induced representation can be thought of as acting on the space
\begin{equation}\label{reprspaceind}
V_{\Gamma/\Gamma'} \defeq \mathrm{span}_\mathbb{C} \{ g_1, \dots, g_n \} = \left\{ \sum_{i=1}^n \alpha_i g_i : \alpha_1, \dots, \alpha_n\in \mathbb{C} \right\},
\end{equation}
To describe the action, note that for every $\gamma\in \Gamma$ and for each $i\in [n]$ there exists $\sigma(i)\in [n]$ and $\tilde{\gamma}\in \Gamma'$ such that $ \gamma g_i = g_{\sigma(i)} \tilde{\gamma}. $ The action of $\lambda_{\Gamma/\Gamma'}$ is then given by
$$
\lambda_{\Gamma/\Gamma'}(\gamma)\left( \sum_{i=1}^n \alpha_i g_i \right) = \sum_{i=1}^n \alpha_i g_{\sigma(i)}.
$$
In fact, $\sigma\in S_n$ is a permutation of $[n]$. With respect to the basis $g_1, \dots, g_n$, the element $\lambda_{\Gamma/\Gamma'}(\gamma)$ acts on $V_{\Gamma/\Gamma'}$ by the permutation matrix associated to $\sigma.$ Moreover, the induced representation splits as an orthogonal direct sum
$$
\lambda_{\Gamma/\Gamma'} = \textbf{1}_\Gamma  \oplus \lambda_{\Gamma/\Gamma'}^0,
$$
where $\lambda_{\Gamma/\Gamma'}^0$ is representation acting on the $(n-1)$-dimensional subspace given by
\begin{equation}\label{reprspaceind0}
V_{\Gamma/\Gamma'}^0 \defeq \left\{ \sum_{i=1}^n \alpha_i g_i \in V :  \sum_{i=1}^n \alpha_i = 0  \right\}.
\end{equation}
From \eqref{decompFormulaZeta} we deduce that the Selberg zeta function of the subgroup $\Gamma'\subset \Gamma$ can be written as
\begin{equation}\label{VZ_ind_formula_2}
Z_{\Gamma'}(s) = Z_\Gamma(s) Z_\Gamma(s,\lambda_{\Gamma/\Gamma'}^0).
\end{equation}
In light of this factorization we obtain the following crucial fact: 
\begin{center}
\textit{Every new resonance for $X' = \Gamma'\backslash\mathbb{H}^2$ with respect to $X=\Gamma\backslash\mathbb{H}^2$ is a zero of the twisted Selberg zeta function $Z_\Gamma(s,\lambda_{\Gamma/\Gamma'}^0).$}
\end{center}

\subsection{Schreier graphs and Property $(\tau)$}\label{sec:Schreier}
A graph is a tuple $\mathcal{G}=(\mathcal{V},\mathcal{E})$ where $\mathcal{V}$ is a set, called the vertex set of $\mathcal{G}$, and $\mathcal{E}$ is a collection of unordered pairs $\{ a,b \}$ of distinct elements $a,b\in \mathcal{V}$, called the edge set of $\mathcal{G}$. For technical reasons, we allow $\mathcal{E}$ to contain repeated edges (also called parallel edges). The graph $\mathcal{G}$ is called ``$k$-regular'' if each vertex is contained in exactly $k$ edges, where edges are counted with repetitions. The adjacency operator $A_\mathcal{G}\colon L^2(\mathcal{V})\to L^2(\mathcal{V})$ is defined for all $f\in L^2(\mathcal{V})$ and $a\in \mathcal{V}$ as 
$$
(A_\mathcal{G}f)(a) \defeq \sum_{\substack{b\in \mathcal{V} \\ \{ a,b \}\in \mathcal{E}}} f(b).
$$
Suppose $\mathcal{G}$ is $k$-regular. The operator $A_\mathcal{G}$ is self-adjoint on $L^2(\mathcal{V})$ and  its operator norm is bounded from above by $k$. Hence, by the spectral theorem, all the eigenvalues of $A_\mathcal{G}$ lie in the interval $[-k,k].$ Moreover, the largest eigenvalue of $A_\mathcal{G}$ equals $k$ as $A_\mathcal{G}\textbf{1} = k \textbf{1}$, where $\textbf{1}$ is the constant one function.

Now recall that given a group $\Gamma$, a finite-index subgroup $\Gamma'\subset \Gamma$, and a generating set $S = \{ \gamma_1, \dots, \gamma_m \}$ of $\Gamma$, the associated Schreier coset graph $\mathcal{G} = \mathcal{G}(\Gamma,\Gamma',S)$ is constructed in the following way: the vertices are the left cosets in $\Gamma/\Gamma'$ and the edges are given by $\{  x\Gamma', s x\Gamma'\}$ with $s\in S\cup S^{-1}.$ Note that this graph has repeated edges if and only if $s_1 x\Gamma' = s_2 x \Gamma'$ for some $x\in \Gamma$ and for two distinct elements $s_1,s_2\in S\cup S^{-1}$. By construction, $\mathcal{G}$ is a $2m$-regular graph where $m = \vert S\vert$ with adjacency operator
$$
(A_\mathcal{G}f)(x\Gamma') \defeq \sum_{j=1}^{m} \left(  f(\gamma_j x\Gamma')+ f(\gamma_j^{-1}x\Gamma') \right).
$$
Recall from the introduction that $\mathcal{G}$ is called
\begin{itemize}
\item an ``$\epsilon$-expander'' if all the eigenvalues of $A_{\mathcal{G}}$, other than the largest eigenvalue $2m$, lie in $[-2m, 2m(1-\epsilon)]$, and
\item a ``two-sided $\epsilon$-expander'' if all the eigenvalues of $A_{\mathcal{G}}$, other than $2m$, lie in the interval $[-2m(1-\epsilon), 2m(1-\epsilon)]$.
\end{itemize}

We now define the following operator for any representation $(\rho,V)$ of $\Gamma$:
\begin{equation}
T(\rho) \defeq \frac{1}{2m} \sum_{j=1}^{m} \left( \rho(\gamma_j) + \rho(\gamma_j)^{-1} \right) \in \mathrm{End}(V).
\end{equation}
We will henceforth assume that $\rho$ is unitary, which is to say that $\rho(\gamma)^{\ast} = \rho(\gamma)^{-1}$ for all $\gamma\in \Gamma$. It then follows that $T(\rho)$ is self-adjoint, that is, $T(\rho)^\ast = T(\rho)$; moreover, since $\Vert T(\rho)\Vert \leqslant 1$, all the eigenvalues of $T(\rho)$ lie inside $[-1,1].$

Let $\lambda_{\Gamma/\Gamma'}$ be the induced representation described in \ref{sec:VZ}. Observe that $ T(\lambda_{\Gamma/\Gamma'})$ is equal to the normalized adjacency matrix of the Schreier coset graph $\mathcal{G}  = \mathcal{G}(\Gamma,\Gamma',S) $, i.e., if we identify the representation space $V_{\Gamma/\Gamma'}$ with $L^2(\Gamma/\Gamma')$ we have $T(\lambda_{\Gamma/\Gamma'}) =  \frac{1}{2m} A_\mathcal{G} $. As a consequence we obtain

\begin{lemma}\label{lem:rel_trho_exp}
Let $\mathcal{G} = \mathcal{G}(\Gamma,\Gamma',S) $ be the Schreier graph as above. Then the following hold true:
\begin{enumerate}[(1)]
\item The graph $\mathcal{G}$ is an $\epsilon$-expander if and only if all the eigenvalues of $T(\lambda_{\Gamma/\Gamma'}^0)$ are contained inside the interval $[-1,1-\epsilon]$, or equivalently, if
$$
\inf_{v\in V, \Vert v \Vert = 1} \langle (I - T(\lambda_{\Gamma/\Gamma'}^0))v, v\rangle_V\geqslant \epsilon.
$$

\item The graph $\mathcal{G}$ a two-sided $\epsilon$-expander if and only if all the eigenvalues of $T(\lambda_{\Gamma/\Gamma'}^0)$ are contained in $[-1+\epsilon,1-\epsilon].$
\end{enumerate}
\end{lemma}

We define the \textit{Kazhdan distance} between a unitary representation $(\rho,V)$ of $\Gamma$ and the identity $\textbf{1}_\Gamma$ by
\begin{equation}\label{defi:kazhdan_dist}
\kappa_S(\rho, \textbf{1}_\Gamma) \defeq \inf_{\substack{ v\in V \\ \Vert v\Vert=1 }} \max_{\gamma\in S \cup S^{-1}}\Vert \rho(\gamma)v-v\Vert.
\end{equation}
We say that $\Gamma$ has \textit{Property ($\tau$)} with respect to a family of finite-index subgroups $(\Gamma_n)_{n\in I}$ of $\Gamma$ if there exists some $\epsilon>0$ such that $\kappa_S(\lambda_{\Gamma/\Gamma_n}^0, \textbf{1}_\Gamma)\geqslant \epsilon$ for all $n\in I.$ Property ($\tau$) does not depend on the choice of a set of generators. In fact, one can show easily that if $S$ and $S'$ are finite sets of generators of $\Gamma$, there are positive constants $c_1$ and $c_2$ such that
$$
c_1 \kappa_S(\rho, \textbf{1}_\Gamma) \leqslant \kappa_{S'}(\rho, \textbf{1}_\Gamma) \leqslant c_2 \kappa_S(\rho, \textbf{1}_\Gamma).
$$ 
Property $(\tau)$ is an important group-theoretic concept and was first defined by Lubotzky and Zimmer \cite{LubotzkyZimmer}. It can be seen as a weaker variant of Kazhdan's Property (T), introduced in \cite{KazhdanT}, and is exactly what is needed to make the Cayley graphs $\mathrm{Cay}(\Gamma/\Gamma_n, S)$ expanders:

\begin{lemma}\label{lem:tauexp} The following conditions are equivalent:
\begin{enumerate}[(i)]
\item $\Gamma$ has Property $(\tau)$ with respect to $(\Gamma_n)_{n\in I}$
\item For any generating subset $S$ of $\Gamma$, the Cayley graphs $\mathrm{Cay}(\Gamma/\Gamma_n, S)$ form a family of expanders.
\end{enumerate}
\end{lemma}

\begin{proof}
A simple calculation shows that for every unitary representation $(\rho,V)$ of $\Gamma$ and for every $v\in V$ we have
\begin{equation}\label{ineq:CSandcalc}
\frac{1}{4m}\sum_{\gamma\in S \cup S^{-1}} \Vert v -\rho(\gamma)v\Vert^2 = \langle (I-T(\rho))v,v\rangle_V.
\end{equation}
It follows that
$$
\frac{1}{4m}  \kappa_S(\rho, \textbf{1}_\Gamma)^2 \leqslant\inf_{v\in V, \Vert v \Vert = 1}\langle (I-T(\rho))v,v\rangle_V\leqslant \frac{1}{2}  \kappa_S(\rho, \textbf{1}_\Gamma)^2.
$$
Lemma \ref{lem:tauexp} now follows directly from Lemma \ref{lem:rel_trho_exp}.
\end{proof}

\subsection{Schottky groups}\label{Schottky_groups}
We now define Schottky groups, referring the reader to \cite[Chapter 15]{Borthwick_book} for a comprehensive discussion. A Schottky group is a convex cocompact subgroup $\Gamma \subset \mathrm{SL}_2(\R)$ constructed in the following way:
\begin{itemize}
\item Fix a positive integer $m$ and open, non-intersecting euclidean disks $D_1, \dots, D_{2m} \subset \mathbb{C}$ (in no particular order) with centers on the real line.
\item For every $j\in \{ 1,\dots,m \}$ let $\gamma_j\in \mathrm{SL}_2(\mathbb{R})$ be the isometry that maps the exterior of $D_j$ to the interior of $D_{j+m}$. Moreover, for every $j\in \{ m+1,\dots,2m \}$ put
$$
\gamma_{j} \defeq \gamma_{j-m}^{-1}.
$$
For notational purposes it is helpful to the define the indices cyclically modulo $ 2m $ so that
\begin{equation*}
\text{$\gamma_{j+m} = \gamma_{j}^{-1}$ and $\gamma_{j+2m} = \gamma_{j}.$}
\end{equation*}
These definitions imply that for all $i\neq j$ we have 
$$
\overline{\gamma_{i}(D_j)} \subset D_{i+m}.
$$
\item Let $\Gamma \subset \mathrm{SL}_2(\R)$ be the free group generated by the elements $\gamma_1, \dots, \gamma_{2m}.$
\end{itemize}

By Button's result \cite{Button} every convex cocompact hyperbolic surface $X$ can be realized as the quotient of $\mathbb{H}^2$ by a Schottky group $\Gamma$, see also \cite[Theorem~15.3]{Borthwick_book}. Moreover, the complement $\mathbb{H}^2 \smallsetminus \bigsqcup_{j=1}^{2m} D_j $ provides a fundamental domain for the action of $\Gamma$ on $\mathbb{H}^2$.
\begin{figure}[h]
\centering
\captionsetup{justification=centering}
\begin{tikzpicture}[xscale=1.5, yscale=1.5]
\draw (-4,0) -- (4,0) node [right, font=\small]  {$  \partial \mathbb{H}^2 $};
\draw (0,1) node [above, font=\small]  {$ \mathbb{H}^2 $};
\draw (-3,0) circle [radius = 0.4];
\draw (-1.3,0) circle [radius = 0.8];
\draw (-0.1,0) circle [radius = 0.3];
\draw (1,0) circle [radius = 0.5];
\draw (2.3,0) circle [radius = 0.4];
\draw (3.5,0) circle [radius = 0.2];
\draw (-3,-0.4) node[below,font=\small] {$ D_{1} $};
\draw (-1.3,-0.8) node[below,font=\small] {$ D_{4} $};
\draw (-0.1,-0.3) node[below,font=\small] {$ D_{2} $};
\draw (1,-0.5) node[below,font=\small] {$ D_{3} $};
\draw (2.3,-0.4) node[below,font=\small] {$ D_{5} $};
\draw (3.5,-0.2) node[below,font=\small] {$ D_{6} $};
 \draw [arrow, bend left]  (-3,0.4) to (-1.3,0.8);
 \draw [arrow, bend left]  (-0.1,0.3) to (2.3,0.4);
 \draw [arrow, bend left]  (1,0.5) to (3.5,0.2);
 \draw (-2.2,0.9) node[above,font=\small] {$ \gamma_{1} $};
 \draw (1.1,0.7) node[above,font=\small] {$ \gamma_{2} $};
 \draw (2.4,0.7) node[above,font=\small] {$ \gamma_{3} $};
\end{tikzpicture}
\caption{A configuration of Schottky disks and isometries with $ m=3 $}
\end{figure}

A Fuchsian group $\Gamma$ is called ``elementary'' if its limit set $\Lambda$ is finite. If $\Gamma$ is a Schottky group as above, then it is elementary if and only if $m=1$. In this case, $\Gamma = \langle \gamma\rangle$ is a cyclic group generated by a single hyperbolic element $\gamma$ and $\Gamma\backslash \mathbb{H}^2$ is a hyperbolic cylinder.\\

\textit{Throughout the rest of this paper we assume that $X=\Gamma\backslash \mathbb{H}^2$ is a convex cocompact quotient where $\Gamma$ is a non-elementary Schottky group with Schottky data $D_1, \dots, D_{2m}$ and $\gamma_1, \dots, \gamma_{2m}$ as above. Moreover, we set $D\defeq  \bigsqcup_{j=1}^{2m} D_j$, $I_j = D_j\cap \mathbb{R}$, and $I = I_1 \sqcup \cdots \sqcup I_{2m}.$ These notations and assumptions will remain fixed in the sequel.}

\subsection{Transfer operator}\label{sec:transfer_operator} 
In what follows, let $V$ be a finite-dimensional complex vector space with hermitian inner product $\langle\cdot,\cdot\rangle_V$ and induced norm $\Vert v\Vert_V = \sqrt{\langle v,v\rangle_V}.$ Let $\rho\colon\Gamma\to\mathrm{U}(V)$ be a unitary representation. Recall that  ``unitary'' means that for all $\gamma\in \Gamma$ and for all $ v,w\in V$ we have $\langle\rho(\gamma)v,\rho(\gamma)w\rangle_V = \langle v,w\rangle_V$ and in particular $\Vert \rho(\gamma)v\Vert_V = \Vert v\Vert_V$. Let $H^2(D,V)$ be the Hilbert space of $V$-valued, square-integrable, holomorphic functions on $D =  \bigsqcup_{j=1}^{2m} D_j$:
\begin{equation}\label{defi_bergman}
H^2(D,V) \defeq  \left\{ \text{$F\colon D\to V$ holomorphic} \left\vert \; \left\Vert F \right\Vert < \infty \right.\right\}
\end{equation}
with $L^2$-norm given by
$$
\Vert F\Vert^2 \defeq \int_D \Vert F(z)\Vert_V^2 \dvol(z).
$$
Here, $\vol$ denotes the Lebesgue measure on the complex plane and $\Vert \cdot \Vert_V$ is the norm of the representation space $V$. We now define the holomorphic family of operators, parametrized by $s\in \mathbb{C}$,
\begin{equation}\label{op}
\mathcal{L}_{s,\rho}\colon H^2(D,V) \to H^2(D,V)
\end{equation}
by the formula
\begin{equation}\label{transf_defi}
\mathcal{L}_{s,\rho} F(z) \defeq \sum_{\substack{ i=1 \\ i\neq j }}^{2m} \gamma_i'(z)^s \rho(\gamma_i)^{-1} F(\gamma_i (z))\ \mathrm{if}\ z\in D_j
\end{equation}
for all $F\in H^2(D,V)$. Given $\gamma\in \Gamma$ and $z\in D$, the composition $\rho( \gamma )^{-1}F(z)$ is the result of applying the unitary endomorphism $\rho(\gamma)^{-1}\in \mathrm{U}(V)$ to the vector $F(z) \in V$.

In the one-dimensional case $V=\C$, the functional space \eqref{defi_bergman} reduces to the classical Bergman space $H^2(D)$ and the operator \eqref{op} reduces to the classical transfer operator for Schottky groups which can be found for instance in Borthwick's book \cite[Chapter 15]{Borthwick_book}. The next result relates the transfer operator to the Selberg zeta function:

\begin{prop}[Fredholm determinant identity]\label{Fredholm_identity} For every $s\in \C$ the operator \eqref{op} is trace class and we have the identity 
\begin{equation}\label{Fredholm_identity_equation}
Z_{\Gamma}(s,\rho) = \det(1-\mathcal{L}_{s,\rho}).
\end{equation}
\end{prop}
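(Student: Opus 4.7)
My plan is to follow the classical Ruelle--Fried strategy for expressing dynamical zeta functions as Fredholm determinants, adapted to the twisted situation. The key observation is that $\mathcal{L}_{s,\rho}$ decomposes as a finite sum of weighted composition operators $F \mapsto \gamma_i'(\cdot)^s\, \rho(\gamma_i)^{-1}\, F \circ \gamma_i$ indexed by $i \neq j$ on each disk $D_j$, and that the Schottky assumption forces $\gamma_i(\overline{D_j})$ to be strictly contained in $D_{i+m}$ whenever $i \neq j$. A contracting holomorphic self-map of a disk induces a nuclear (hence trace class) composition operator on the Bergman space, and tensoring with the finite-dimensional representation space $V$ preserves this. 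Therefore $\mathcal{L}_{s,\rho}$ is trace class on $H^2(D,V)$, and the Fredholm determinant $\det(1-\mathcal{L}_{s,\rho})$ is an entire function of $s$.

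The heart of the proof is the trace formula for iterates. Expanding $\mathcal{L}_{s,\rho}^N$ gives a sum over reduced words $\mathbf{a} = a_1 \cdots a_N$ with $a_{k+1} \neq a_k + m \pmod{2m}$ of weighted composition operators $F \mapsto \gamma_\mathbf{a}'(\cdot)^s \rho(\gamma_\mathbf{a})^{-1} F \circ \gamma_\mathbf{a}$ acting on the appropriate $D_{a_N}$. When $\mathbf{a}$ is additionally \emph{cyclically} reduced, the associated map $\gamma_\mathbf{a}$ is a holomorphic contraction of the disk containing its attracting fixed point $x_\mathbf{a}$, and the trace of a single weighted composition operator on a Bergman disk is $w(x_\mathbf{a})/(1 - \phi'(x_\mathbf{a}))$. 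Cyclically non-reduced words yield composition operators whose trace vanishes because the map sends the disk into a different disk. Summing, one obtains
\begin{equation*}
\Tr(\mathcal{L}_{s,\rho}^N) = \sum_{\mathbf{a}} \frac{\gamma_\mathbf{a}'(x_\mathbf{a})^s\, \tr\rho(\gamma_\mathbf{a})^{-1}}{1-\gamma_\mathbf{a}'(x_\mathbf{a})},
\end{equation*}
where the sum runs over cyclically reduced words of length $N$. (Here I would use that $\rho$ is unitary, so $\tr\rho(\gamma_\mathbf{a})^{-1} = \overline{\tr\rho(\gamma_\mathbf{a})}$, together with the fact that the relevant hyperbolic element has $\gamma_\mathbf{a}'(x_\mathbf{a}) = e^{-\ell(\gamma_\mathbf{a})}$.)

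Next I would use the identity $-\log\det(1-\mathcal{L}_{s,\rho}) = \sum_{N\geq 1} \frac{1}{N} \Tr(\mathcal{L}_{s,\rho}^N)$, valid for $\mathrm{Re}(s)$ large enough that the spectral radius of $\mathcal{L}_{s,\rho}$ is $<1$. The classical bijection between cyclic equivalence classes of cyclically reduced words in the Schottky alphabet and conjugacy classes of hyperbolic elements in the free group $\Gamma$ allows me to reorganize: every cyclically reduced word of length $N$ is a cyclic rotation of a power $\mathbf{b}^n$ of a primitive cyclically reduced word $\mathbf{b}$, and the length-$N$ cyclic class of $\mathbf{b}^n$ contributes with multiplicity $|\mathbf{b}|$ from the $|\mathbf{b}|$ rotations. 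After this reorganization and expanding $1/(1-e^{-n\ell(\gamma)}) = \sum_{k\geq 0} e^{-kn\ell(\gamma)}$ geometrically, the double sum becomes
\begin{equation*}
-\log\det(1-\mathcal{L}_{s,\rho}) = \sum_{k=0}^{\infty} \sum_{[\gamma]\in [\Gamma]_{\mathsf{prim}}} \sum_{n=1}^{\infty} \frac{1}{n} \tr\rho(\gamma)^n\, e^{-(s+k)n\ell(\gamma)},
\end{equation*}
which, using $-\sum_{n\geq 1}\frac{1}{n}\tr A^n = \log\det(I-A)$ applied to $A = \rho(\gamma) e^{-(s+k)\ell(\gamma)}$, is exactly $-\log Z_\Gamma(s,\rho)$. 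Exponentiating proves the identity for $\mathrm{Re}(s)$ large, and both sides are entire, so analytic continuation finishes the proof.

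The main obstacle is the combinatorial bookkeeping in the passage from the trace-of-iterates formula to the Euler product: correctly identifying which words contribute (only cyclically reduced ones), matching multiplicities between rotations and conjugacy class representatives, and verifying that the substitution $\gamma_\mathbf{a}'(x_\mathbf{a}) = e^{-\ell(\gamma_\mathbf{a})}$ is compatible with the convention used in the definition of $Z_\Gamma(s,\rho)$. Each of these is standard in the untwisted case, and the twisting by a unitary $\rho$ inserts $\tr\rho(\gamma_\mathbf{a})^n$ factors that slot cleanly into the determinant expansion once one carefully matches $\rho(\gamma_\mathbf{a})^{-1}$ at the cycle level with $\rho(\gamma_\mathbf{a})$ through unitarity and the conjugacy class structure.
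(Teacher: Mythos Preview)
The paper does not actually prove this proposition; it simply cites the literature (Pollicott, Pollicott--Rocha, Guillop\'e--Lin--Zworski for the untwisted case, and \cite{FP_szf,Pohl_Soares,Naud_Pohl_Soares,NaudMagee,JNS} for the twisted case), remarking that a full proof can be found in \cite{JNS}. Your outline is precisely the standard Ruelle--Fried argument used in those references: nuclearity from strict contraction of the Schottky maps on the disks, the Atiyah--Bott-type fixed-point trace formula for the iterates, reorganization of the sum over cyclically reduced words into primitive conjugacy classes, geometric expansion of $1/(1-e^{-n\ell(\gamma)})$, and analytic continuation. So your proposal is correct and aligned with the approach in the cited sources.

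Two small remarks on the write-up. First, your sentence ``cyclically non-reduced words yield composition operators whose trace vanishes because the map sends the disk into a different disk'' is slightly imprecise: in the iterate formula \eqref{iterates_formula}, a term indexed by $\mathbf{a}\in W_N^j$ contributes to the diagonal block on $H^2(D_j,V)$ only when $\gamma_{\mathbf{a}}(D_j)\subset D_j$, i.e.\ $j=a_1+m$, and combined with $a_N\neq j$ this is exactly the cyclically reduced condition; the remaining terms are genuinely off-diagonal between distinct $D_j$'s rather than on-diagonal with zero trace. Second, you do not need unitarity of $\rho$ to pass from $\tr\rho(\gamma_{\mathbf{a}})^{-1}$ to $\tr\rho(\gamma)^n$ in the final Euler product: it suffices that $[\gamma]\mapsto[\gamma^{-1}]$ is a bijection on $[\Gamma]_{\mathsf{prim}}$ with $\ell(\gamma)=\ell(\gamma^{-1})$, so relabelling the sum already matches the definition \eqref{def_szf_twisted}. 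Neither point affects the validity of the argument.
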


For the trivial representation $\rho = \textbf{1}_\Gamma$, identities in the spirit of \eqref{Fredholm_identity_equation} are well-known in \textit{thermodynamic formalism}, a subject going back to Ruelle \cite{Ruelle_zeta}. The relation between the Selberg zeta function and transfer operators has been studied by a number of different authors. For the convex cocompact setting (no cusps) we refer to \cite{Pollicott, Pollicott_Rocha, Guillope_Lin_Zworski}. The extension to non-trivial twists $\rho$ can be found in the more recent papers \cite{FP_szf,Pohl_Soares,Naud_Pohl_Soares,NaudMagee}. A proof of Proposition \ref{Fredholm_identity} is given in \cite{JNS}. 

Proposition \ref{Fredholm_identity} has the following remarkable corollary: since $\mathcal{L}_{s,\rho}$ depends holomorphically on the variable $s$ it follows directly that $Z_{\Gamma}(s,\rho)$ extends to an entire function. This is far from obvious from the definition of the Selberg zeta function in \eqref{def_szf_twisted} as an infinite product over conjugacy classes.

\subsection{Reduced words and bounds for derivatives}\label{reduced_words_section}
We need some notations for indexing elements of the Schottky group $\Gamma$. Given a finite word
$$
\textbf{a} = a_1 \cdots a_N 
$$
with letters $a_1, \dots, a_N $ in the alphabet $ [2m] \defeq \{1,\dots,2m\}$, we set
\begin{equation*}
\gamma_{\textbf{a}} \defeq \gamma_{a_1}\circ \cdots \circ \gamma_{a_N} \in \Gamma.
\end{equation*}
Recall that $\Gamma$ is freely generated by the elements $\gamma_1,\dots, \gamma_{2m}$, which is equivalent with the generators $\gamma_1,\dots, \gamma_{2m}$ having no relations other than the trivial ones $\gamma_i^{-1} \gamma_i = \gamma_i \gamma_i^{-1} = e$. Recall also that we write $ \gamma_i^{-1} = \gamma_{i+m}$ where the indices are defined modulo $2m$. We call the word $\textbf{a}$ ``reduced'' if the corresponding element $\gamma_\textbf{a}$ is reduced when viewed as a word in the alphabet $\{ \gamma_1,\dots, \gamma_{2m} \}$. This is the case if and only if $\textbf{a} = a_1 \cdots a_N $ satisfies $a_i \neq a_{i+1} + m \;(\mathrm{mod}\; 2m)$ for all $i=1,\dots, N-1$. 

We denote by $W_N$ the set of reduced words of length $N$: 
\begin{equation}\label{reduced_words}
W_N \defeq \{ \textbf{a}=a_1\cdots a_N : \text{ $a_i \neq a_{i+1} + m \;(\mathrm{mod}\; 2m)$ for all $i=1,\dots, N-1$} \}.
\end{equation}
For $j \in [2m]$ we denote by $W_N^j$ the set of reduced words of length $N$ not ending with the letter $j$:
$$
W_N^j \defeq \{ \textbf{a}=a_1\cdots a_N \in W_N : a_N \neq j \}.
$$
With these notations in place, one verifies inductively that for all $j\in [2m]$ and all $N\in \mathbb{N}$,
\begin{equation}\label{iterates_formula}
\mathcal{L}_{s,\rho}^N F(z) = \sum_{\textbf{a}\in W_N^j} \gamma_{\textbf{a}}'(z)^s \rho(\gamma_\textbf{a})^{-1} F(\gamma_\textbf{a}(z)) \text{ if $z\in D_j$}.
\end{equation}
We record the following crucial distortion estimates, referring to Naud \cite{Naud1} for the proofs:

\begin{prop}[Distortion estimates]\label{prop:DE}
The following estimates hold true:
\begin{enumerate}[(1)]
\item (Uniform hyperbolicity) There are $c_1,c_2 >0$ and $0<\theta_1 < \theta_2 < 1$ such that for all $N$, all $j\in [2m]$ and all $\textbf{a}\in W_N^j$, we have
$$
c_1 \theta_1^{N} < \sup_{z\in D_j}  \vert \gamma'_{\textbf{a}}(z) \vert < c_2 \theta_2^{N}.
$$
\item (Bounded distortion 1) There exists $c_3 > 0$ such that for all $N$, all $j\in [2m]$, all $\textbf{a}\in W_N^j$ and all $z\in D_j$, we have
$$
\sup_{z\in D_j} \left\vert \frac{\gamma_{\textbf{a}}''(z)}{\gamma_{\textbf{a}}'(z)} \right\vert \leqslant c_3.
$$
\item (Bounded distortion 2)
There exists $c_4 > 0$ such that for all words $\textbf{a}\in W_N^j$ and all pair of points $z_1,z_2 \in D_j$, we have 
$$
\left\vert \frac{\gamma_\textbf{a}'(z_1)}{\gamma_\textbf{a}'(z_2)}  \right\vert \leqslant c_4.
$$
\end{enumerate}
All the constants depend solely upon $\Gamma$.
\end{prop}

\subsection{Topological pressure and Ruelle-Perron-Frobenius theorem}\label{sec:toppressure}
Notations being as in §\ref{Schottky_groups}, consider the map
\begin{equation*}
T \colon I \defeq \bigsqcup_{j=1}^{2m} I_j \rightarrow I, \quad T(x)=\gamma_j(x)\ \mathrm{if}\ x\in I_j.
\end{equation*}
This map, sometimes called the ``Bowen--Series map'', encodes the dynamics of the group $\Gamma$. The origins of this type of coding go back to the work of Bowen--Series \cite{BowenSeries1}. The limit set $\Lambda$ of $\Gamma$ can be re-interpreted as the non-wandering set of $T$:
\begin{equation}\label{limit_set_2}
\Lambda =\bigcap_{N=1}^{\infty} T^{-N}(I).
\end{equation}
For a continuous function $\varphi\colon I \to \R$, we define the ``topological pressure'' in terms of weighted sums over periodic orbits through the formula
\begin{equation}\label{pressure0}
P(\varphi) \defeq \lim_{N \rightarrow \infty} \frac{1}{N} \log \left( \sum_{T^N x=x} e^{\varphi^{(n)}(x)}\right),
\end{equation}
where 
\begin{equation}
\varphi^{(n)}(x) \defeq \varphi(x)+\varphi(Tx)+\ldots +\varphi(T^{n-1}x).
\end{equation}
By the variational formula it follows that
\begin{equation}
P(\varphi)=\sup_{\mu} \left ( h_\mu(T)- \int_{\Lambda} \varphi d\mu \right),
\end{equation}
where $\mu$ ranges over the set of $T$-invariant probability measures and $h_\mu(T)$ is the measure theoretic entropy. We refer the reader to \cite{ParryPollicott2} for more background on thermodynamic formalism, including facts and properties of the topological pressure. Important for us is Bowen's celebrated result \cite{Bowen1}, which says that the map
\begin{equation}\label{pressure_definition}
P \colon \R \to \R, \quad\sigma \mapsto P(\sigma) \defeq P(-\sigma \log  \vert T' \vert )
\end{equation}
is convex, strictly decreasing and vanishes precisely at $\sigma=\delta$, the Hausdorff dimension of the limit set. The relevance of the topological pressure stems from the Ruelle-Perron-Frobenius theorem:

\begin{prop}[Ruelle-Perron-Frobenius]\label{Ruelle_Perron_Frobenius} Set $\mathcal{L}_\sigma=\mathcal{L}_{\sigma,\textbf{1}_\Gamma}$ where $\sigma\in \R$ is real and $\textbf{1}_\Gamma$ is the one-dimensional trivial representation. Then the following statements hold true:
\begin{enumerate}[(1)]
\item The spectral radius of $\mathcal{L}_\sigma$ on $C^1(I)$ is equal to $e^{P(\sigma)}$ which is a simple eigenvalue associated to a strictly positive
eigenfunction $\varphi_\sigma>0$ in $C^1(I)$.

\item The operator $\mathcal{L}_\sigma$ on $C^1(I)$ is quasi-compact with essential spectral radius smaller than $\kappa(\sigma)e^{P(\sigma)}$ for some 
$\kappa(\sigma)<1$.

\item There are no other eigenvalues on $\vert z\vert=e^{P(\sigma)}$.

\item Moreover, the spectral projector $\mathbb{P}_\sigma$ on $\{e^{P(\sigma)}\}$ is given by
$$\mathbb{P}_\sigma(f)=\varphi_\sigma \int_{\Lambda} f d\mu_\sigma,$$
where $\mu_\sigma$ is the unique $T$-invariant probability measure on $\Lambda$ that satisfies
$$\mathcal{L}_\sigma^*(\mu_\sigma)=e^{P(\sigma)}\mu_\sigma.$$
\end{enumerate}
\end{prop}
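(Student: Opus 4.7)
The plan is to follow the standard thermodynamic--formalism strategy adapted to the Bowen--Series setup. First I would set up a Lasota--Yorke (or Doeblin--Fortet) inequality for $\mathcal{L}_\sigma$ on $C^1(I)$. Using the explicit formula for iterates,
\begin{equation*}
\mathcal{L}_\sigma^N f(x) = \sum_{\mathbf{a}\in W_N^j} |\gamma_\mathbf{a}'(x)|^\sigma f(\gamma_\mathbf{a}(x)),\qquad x\in I_j,
\end{equation*}
the uniform hyperbolicity bound \eqref{uniform_hyperbolicity} and the bounded distortion estimate \eqref{bounded_distortion} allow one to differentiate termwise and control the derivatives by a factor of order $\theta_2^N$ plus a sup-norm term. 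This yields an inequality of the form
\begin{equation*}
\bigl\|\mathcal{L}_\sigma^N f\bigr\|_{C^1} \leq A\,\theta_2^{N}\,\|f\|_{C^1} \,\|\mathcal{L}_\sigma^N \mathbf{1}\|_\infty + B\,\|\mathcal{L}_\sigma^N\|\,\|f\|_\infty,
\end{equation*}
with constants depending only on the Schottky data. Combined with the compact embedding $C^1(I)\hookrightarrow C^0(I)$, Hennion's theorem then produces quasi-compactness with essential spectral radius bounded by $\theta_2 \cdot r(\mathcal{L}_\sigma)$, which proves item (ii) once the leading eigenvalue is shown to equal $e^{P(\sigma)}$.

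Next I would identify the spectral radius. Since $\mathcal{L}_\sigma$ is a positive operator on the real Banach space $C^1(I,\R)$ and leaves the cone of strictly positive functions invariant, a Perron--Frobenius-type argument (using Birkhoff's theorem on contraction of Hilbert's projective metric, which applies thanks to bounded distortion) produces a strictly positive eigenfunction $\varphi_\sigma>0$ associated to a simple real eigenvalue $\lambda_\sigma>0$ equal to the spectral radius; the same argument for the dual operator yields a unique positive eigenmeasure $\mu_\sigma$ with $\mathcal{L}_\sigma^*\mu_\sigma=\lambda_\sigma\mu_\sigma$. To match $\lambda_\sigma$ with $e^{P(\sigma)}$ I would use the standard identification of traces of $\mathcal{L}_\sigma^N$ with weighted sums over periodic points of $T$: a direct calculation shows
\begin{equation*}
\tr(\mathcal{L}_\sigma^N) = \sum_{T^N x = x} \frac{|\gamma_\mathbf{a}'(x)|^\sigma}{1-\gamma_\mathbf{a}'(x)},\qquad \text{essentially} \sim \sum_{T^N x=x} e^{-\sigma \log|T^N{}'(x)|},
\end{equation*}
so by the definition \eqref{pressure_definition} of $P(\sigma)$ and the fact that the trace is dominated by the spectral radius raised to the $N$th power, a logarithmic limit gives $\lambda_\sigma = e^{P(\sigma)}$.

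For the absence of other peripheral eigenvalues (item (iii)), I would invoke the topological mixing of the Bowen--Series map $T$ on $\Lambda$: if $\mathcal{L}_\sigma f = e^{P(\sigma) + i\alpha} f$ for some $\alpha\in\R$ and $f\neq 0$, then $|f|$ satisfies $\mathcal{L}_\sigma|f|\geq e^{P(\sigma)}|f|$, hence by simplicity of $\lambda_\sigma$ we have $|f|=c\,\varphi_\sigma$ for some constant $c$, and writing $f = \varphi_\sigma u$ with $|u|\equiv 1$ one shows that $u\circ T = e^{i\alpha} u$; mixing of $T$ on $\Lambda$ then forces $\alpha=0$ and $u$ constant. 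Finally, the formula for the spectral projector (item (iv)) is obtained by normalizing so that $\int\varphi_\sigma\,d\mu_\sigma=1$ and using that the rank-one operator $f\mapsto \varphi_\sigma\int f\,d\mu_\sigma$ commutes with $\mathcal{L}_\sigma$ and has the correct image and kernel, being the spectral projector associated to the isolated simple eigenvalue $e^{P(\sigma)}$.

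The main technical obstacle is the Lasota--Yorke inequality: one must organize the sum over $W_N^j$ so that derivatives of $|\gamma_\mathbf{a}'|^\sigma$ are controlled by \eqref{bounded_distortion} and the ``contraction'' factor from differentiating $f\circ\gamma_\mathbf{a}$ beats any polynomial blow-up coming from the number $|W_N^j|\leq(2m-1)^N$ of summands. Once this is done, everything else is a routine application of standard Perron--Frobenius theory for positive quasi-compact operators, and the result can also be found in \cite{Pollicott, Borthwick_book, ParryPollicott2}.
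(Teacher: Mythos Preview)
The paper does not actually prove this proposition: immediately after the statement it simply writes ``For a proof, we refer to \cite[Theorem~2.2]{ParryPollicott2}.'' So there is nothing to compare your argument against beyond the fact that the result is classical.

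Your sketch follows the standard route (Lasota--Yorke inequality plus Hennion's theorem for quasi-compactness; Birkhoff cone contraction for the positive eigenfunction and eigenmeasure; identification of the leading eigenvalue with $e^{P(\sigma)}$ via periodic-orbit sums; a mixing argument to rule out other peripheral eigenvalues; and the rank-one formula for the projector). This is indeed how the result is proved in the references you and the paper cite. Two small points worth tightening if you flesh this out: (i) the Lasota--Yorke inequality you wrote is slightly garbled --- the clean form is $\|(\mathcal{L}_\sigma^N f)'\|_\infty \le C\,\theta_2^N\,\|\mathcal{L}_\sigma^N\mathbf{1}\|_\infty\,\|f'\|_\infty + C\,\|\mathcal{L}_\sigma^N\mathbf{1}\|_\infty\,\|f\|_\infty$, which gives essential spectral radius at most $\theta_2\,e^{P(\sigma)}$; and (ii) invoking $\tr(\mathcal{L}_\sigma^N)$ on $C^1(I)$ is delicate since the operator is only quasi-compact there --- it is cleaner either to work on the Bergman space (where it is genuinely trace class), or to identify $\lambda_\sigma$ with $e^{P(\sigma)}$ directly from $\lambda_\sigma^N = \int \mathcal{L}_\sigma^N\mathbf{1}\,d\mu_\sigma$ and the pressure formula, bypassing traces entirely.
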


For a proof, we refer to \cite[Theorem~2.2]{ParryPollicott2}. An important consequence of the Ruelle--Perron--Frobenius theorem is the following estimate, which is due to Naud \cite{Naud1}:

\begin{lemma}[Pressure estimate]\label{pressure_lemma}
For every $M>0$ there exists a constant $C>0$ such that for all $\sigma\in [0,M]$ and all $N\geqslant 1$,
$$
\sum_{j=1}^{2m} \sum_{\textbf{a} \in W_N^j} \Vert \gamma_\textbf{a}'\Vert_{\infty,D_j}^\sigma \leqslant C e^{N P(\sigma)},
$$
where
$$
\Vert g\Vert_{\infty,D_j} \defeq \sup_{z\in D_j} \vert g(z)\vert,
$$
\end{lemma}

\section{Proof of Theorem \ref{thm:dist_zero_free}}\label{sec:dist_zero_free}
In this section we prove the following result concerning the location of the zeros of twisted Selberg zeta functions, from which Theorem \ref{thm:dist_zero_free} follows directly:

\begin{theorem}[Universal zero-free region]\label{thm:dist_zero_free:reformul}
Fix a non-elementary Schottky group $\Gamma$ as in §\ref{Schottky_groups}. Then, there exists a constant $\eta_\infty > 0$ and a function $\eta\colon \mathbb{R} \to \mathbb{R} $ with
\begin{itemize}
\item $\eta(0) = 0$,
\item $\eta(t) > 0$ for $t\neq 0$, and
\item $\lim_{\vert t\vert \to \infty} \eta(t) = \eta_\infty,$
\end{itemize}
with the following property: for any finite-dimensional unitary representation $\rho \colon \Gamma \to \mathrm{U}(V)$, all the zeros of $Z_\Gamma(s,\rho)$ are contained inside
\begin{equation}\label{region_W_reformul}
\{ s = \sigma + it : \sigma \leqslant \delta -\eta(t) \}.
\end{equation}
Furthermore, we may take
$$
\eta(t) = \eta_\infty \frac{\xi(t) }{1+\xi(t)}
$$
with $\xi(t) = \frac{\vert t\vert^2}{\log \left(  C + \frac{1}{\vert t\vert} \right)}$ for some constant $C>1$ depending only on $\Gamma$.
\end{theorem}

As a consequence we obtain

\begin{cor}\label{cor:dist_zero_free}
There are constants $\eta_0 = \eta_0(\Gamma) >0 $ and $c_0 = c_0(\Gamma)>0$ with the following property: if $Z_{\Gamma}(s,\rho)$ has a zero $s=\sigma + it$ with $\sigma\geqslant \delta-\eta$, then either $\eta > \eta_0$ or $\vert t\vert \leqslant c_0 (\eta \log \eta^{-1})^{1/2}$.
\end{cor}

\begin{proof}
Let $\eta(t) = \eta_\infty \frac{\xi(t) }{1+\xi(t)}$ be as in Theorem \ref{thm:dist_zero_free:reformul}. Put $\eta_0 = \eta_\infty/2$. Suppose that $Z_{\Gamma}(s,\rho)$ has a zero at $s=\sigma + it$ with $\sigma \geqslant \delta - \eta$ and $0<\eta<\eta_0.$ Then Theorem \ref{thm:dist_zero_free:reformul} forces $\sigma\geqslant \delta-\eta(t),$ which can be rearranged to give
$$
\frac{\xi(t)}{1+\xi(t)}\leqslant \frac{\eta}{\eta_\infty}.
$$
This yields
$$
\xi(t)\leqslant \frac{\eta}{\eta_\infty - \eta} \leqslant \frac{\eta}{\eta_\infty - \eta_0} = \frac{2\eta}{\eta_\infty}.
$$
Recalling that $\xi(t) = \vert t\vert^2  \log \left(  C + \frac{1}{\vert t\vert} \right)$, the previous inequality implies that
$$
\vert t\vert \ll_\Gamma (\eta \log \eta^{-1})^{1/2},
$$
as claimed.
\end{proof}

Using the Venkov--Zograf formula \eqref{VZ_ind_formula}, it is clear how to deduce Theorem \ref{thm:dist_zero_free} from Theorem \ref{thm:dist_zero_free:reformul} above. The proof of the latter occupies the remainder of this section.

Recall that $X=\Gamma\backslash \mathbb{H}^2$ is a non-elementary convex co-compact hyperbolic quotient and that we fix a Schottky representation for $\Gamma$ as in §\ref{Schottky_groups} with generators $\gamma_1, \dots, \gamma_m$. For the rest of this section we fix a finite-dimensional, unitary representation $(\rho,V)$ of $\Gamma$ and we let $\mathcal{L}_{s,\rho}$ be the associated transfer operator as defined in §\ref{sec:transfer_operator} above.

\subsection{Normalized Transfer Operators}\label{sec:norm_to}
For every $\sigma\in \R$ let $P(\sigma)$ be the topological pressure given by \eqref{pressure_definition}. Recall that by the Ruelle--Perron--Frobenius Theorem (Proposition \ref{Ruelle_Perron_Frobenius}), the operator $\mathcal{L}_\sigma$ has maximal eigenvalue $e^{P(\sigma)}$ with strictly positive eigenfunction $\varphi_\sigma \in C^1(I)$. This eigenfunction is unique up to scaling, so for every $\sigma\in \R$ we fix once and for all $\varphi_\sigma>0$ so that $\Vert \varphi_\sigma\Vert_\infty = 1.$ We define the ``normalized'' transfer operator for any $s=\sigma+it\in \CC$ as follows:
\begin{equation}\label{normalized_defi}
\widetilde{\mathcal{L}}_{s,\rho} \defeq e^{-P(\sigma)} \varphi_\sigma^{-1} \mathcal{L}_{s,\rho} \varphi_\sigma.
\end{equation}
Since $\varphi_\sigma$ is strictly positive on $I$, this yields a well-defined operator
\begin{equation*}
\widetilde{\mathcal{L}}_{s,\rho}\colon C^1(I,V)\to C^1(I,V),
\end{equation*}
where $C^1(I,V)$ is the set of continuously differentiable functions $f\colon I \to V$. Note that for any $N\in \mathbb{N}$ the $N$-th iterate of the normalized transfer operator can be written as follows:
\begin{equation}\label{normalizedfull}
\widetilde{\mathcal{L}}_{s,\rho}^N = e^{-N P(\sigma)} \varphi_\sigma^{-1} \mathcal{L}_{s,\rho}^N \varphi_\sigma.
\end{equation}
Thus, using \eqref{iterates_formula}, we can write for all $f\in C^1(I,V)$, $j\in [2m]$ and $x\in I_j$,
\begin{equation}
\widetilde{\mathcal{L}}_{s,\rho}^N f(x) = \sum_{\textbf{a}\in W_N^j} w_{\textbf{a},\sigma}(x) \gamma_{\textbf{a}}'(x)^{it} \rho(\gamma_\textbf{a})^{-1} f(\gamma_\textbf{a}(x)),
\end{equation}
where
\begin{equation}\label{weights}
w_{\textbf{a},\sigma}(x) \defeq \frac{\varphi_\sigma(\gamma_{\textbf{a}}(x)) \gamma_{\textbf{a}}'(x)^\sigma}{\varphi_\sigma(x) e^{NP(\sigma)}} > 0.
\end{equation}
We call the operator $\widetilde{\mathcal{L}}_{s,\rho}$ ``normalized''  because
\begin{equation*}
\widetilde{\mathcal{L}}_{\sigma,\id}\textbf{1} = \textbf{1},
\end{equation*}
where $\textbf{1}$ is the constant one function on $I.$ This is equivalent with saying that for all $x\in I$ and $j\in [2m]$ with $x\in I_j$,
\begin{equation}\label{sum_to_one}
\sum_{\textbf{a}\in W_N^j} w_{\textbf{a},\sigma}(x) = 1.
\end{equation}
Now, for each $\textbf{a}\in W_N^j$, we define the vector
\begin{equation}\label{v_a}
v_\textbf{a} \defeq \gamma_{\textbf{a}}'(x)^{it} \rho(\gamma_\textbf{a})^{-1} f(\gamma_\textbf{a}(x))\in V,
\end{equation}
so that
\begin{equation}\label{convexCombiva}
\widetilde{\mathcal{L}}_{s,\rho}^N f(x) = \sum_{\textbf{a}\in W_N^j} w_{\textbf{a},\sigma}(x) v_\textbf{a}.
\end{equation}
It follows that $\widetilde{\mathcal{L}}_{s,\rho}^N f(x)$, when regarded as a vector in $V$, lies in the convex hull spanned by the vectors $v_\textbf{a}\in V$. The following result is a consequence of the fact that the unit sphere in $V$ is strictly convex.

\begin{prop}[Strict convexity]\label{prop:strict_convexity}
Let $s=\sigma+it$ with $\sigma\in [0,\delta]$, let $(\rho,V)$ be a finite-dimensional, unitary representation of $\Gamma$, and suppose $F\in H^2(D, V)$ is a 1-eigenfunction of $\mathcal{L}_{s,\rho}$. Set $f = \varphi_\sigma^{-1} F$. Choose $j\in [2m]$ and $x\in I_j$ such that
$$
\Vert f(x)\Vert_V = \Vert f\Vert_\infty.
$$
Then, notations being as above, we have
\begin{equation}\label{CSapplywei}
\sum_{\textbf{a}\in W_N^j} w_{\textbf{a},\sigma}(x) \Vert f(x) - v_\textbf{a}\Vert_V \leqslant \sqrt{C N(\delta-\sigma)} \Vert f\Vert_\infty.
\end{equation}
Moreover, for all $N\in \mathbb{N}$ and for all words $\textbf{a}\in W_N^j$,
\begin{equation}
\Vert f(x) - v_\textbf{a}\Vert_V \leqslant \sqrt{(\delta-\sigma)} e^{C'N} \Vert f\Vert_\infty.
\end{equation}
The constants $C>0$ and $C'>0$ depend only on $\Gamma.$
\end{prop}

\begin{proof}
Let $F\in H^2(D,V)$, $j\in [2m]$ and $x\in I_j$ be as in the statement. Then for all $N\in \mathbb{N}$,
\begin{equation}\label{prenormalized}
F = \mathcal{L}_{s,\rho}^N F.
\end{equation}
Let $\widetilde{\mathcal{L}}_{s,\rho}$ be the normalized operator in \eqref{normalized_defi} and let $f = \varphi_\sigma^{-1} F$. Then, equation \eqref{prenormalized} becomes
$$
e^{-NP(\sigma)} f(x) = \widetilde{\mathcal{L}}_{s,\rho}^N f(x).
$$
In particular, this gives
$$
e^{-NP(\sigma)} \Vert f(x)\Vert_V^2 = \mathrm{Re}\left\langle \widetilde{\mathcal{L}}_{s,\rho}^N f(x), f(x) \right\rangle_V
$$
Using \eqref{convexCombiva} we can expand the right-hand side to obtain
\begin{equation}\label{tobeiserted}
e^{-NP(\sigma)} \Vert f(x)\Vert_V^2 = \sum_{\textbf{a}\in W_N^j} w_{\textbf{a},\sigma}(x) \mathrm{Re} \left\langle v_\textbf{a}, f(x) \right\rangle_V.
\end{equation}
Using the identity
$$
\mathrm{Re} \left\langle v_\textbf{a}, f(x) \right\rangle_V = \frac{1}{2}\left( \Vert v_\textbf{a}\Vert_V^2 + \Vert f(x)\Vert_V^2 - \Vert f(x)- v_\textbf{a}\Vert_V^2 \right)
$$
together with the bound $\Vert  v_\textbf{a}\Vert_V \leqslant \Vert f\Vert_\infty$ for all $\textbf{a}\in W_N^j$ yields
$$
\mathrm{Re} \left\langle v_\textbf{a}, f(x) \right\rangle_V \leqslant \Vert f\Vert_{\infty}^2 - \frac{1}{2}\Vert f(x)- v_\textbf{a}\Vert_V^2.
$$
Inserting this into \eqref{tobeiserted} gives
\begin{align*}
e^{-NP(\sigma)} \Vert f(x)\Vert_V^2 &\leqslant \sum_{\textbf{a}\in W_N^j} w_{\textbf{a},\sigma}(x) \left(  \Vert f\Vert_{\infty}^2 - \frac{1}{2}\Vert f(x)- v_\textbf{a}\Vert_V^2 \right) \\ 
&\leqslant\Vert f\Vert_{\infty}^2 - \frac{1}{2} \sum_{\textbf{a}\in W_N^j} w_{\textbf{a},\sigma}(x) \Vert f(x)- v_\textbf{a}\Vert_V^2.
\end{align*}
Recall that $\Vert f(x)\Vert_V = \Vert f \Vert_\infty$ by assumption. We can thus rearrange the previous inequality to give
\begin{equation}\label{afterbeinginserted}
\sum_{\textbf{a}\in W_N^j} w_{\textbf{a},\sigma}(x) \Vert f(x)- v_\textbf{a}\Vert_V^2 \leqslant 2\left( 1- e^{-NP(\sigma)}\right) \Vert f\Vert_\infty^2.
\end{equation}
Continuity of the topological pressure $\sigma \mapsto P(\sigma)$ together with the fact that $P(\delta)=0$ yields for all $\sigma\leqslant \delta$,
\begin{equation}\label{further_dos}
1- e^{- N P(\sigma)} = e^{- N P(\delta)}  - e^{- N P(\sigma)} \leqslant C N (\delta-\sigma).
\end{equation}
for some $C=C(\Gamma)>0.$ Inserting this into \eqref{afterbeinginserted} we obtain
$$
\sum_{\textbf{a}\in W_N^j} w_{\textbf{a},\sigma}(x) \Vert f(x)- v_\textbf{a}\Vert_V^2 \leqslant 2C N (\delta-\sigma) \Vert f\Vert_\infty^2.
$$
Applying the Cauchy--Schwarz inequality yields
\begin{align*}
\sum_{\textbf{a}\in W_N^j} w_{\textbf{a},\sigma}(x) \Vert f(x)- v_\textbf{a}\Vert_V &\leqslant \left( \sum_{\textbf{a}\in W_N^j} w_{\textbf{a},\sigma}(x) \Vert f(x)- v_\textbf{a}\Vert_V^2 \right)^{1/2}\\
&\leqslant \sqrt{2C N (\delta-\sigma)} \Vert f\Vert_\infty,
\end{align*}
which proves \eqref{CSapplywei}. In particular, this inequality clearly implies that for all $\textbf{a}\in W_N^j$, 
$$
\Vert f(x) - v_\textbf{a}\Vert_V \leqslant \frac{\sqrt{C N (\delta-\sigma)}}{w_{\textbf{a}}(x)}  \Vert f\Vert_\infty.
$$
It remains to bound the weights from below. From their definition in \eqref{weights} and from the uniform hyperbolicity property in Proposition \ref{prop:DE} it follows that
\begin{equation}\label{further_uno}
w_{\textbf{a},\sigma}(x) \geqslant e^{-C'N}
\end{equation}
for some $C'=C'(\Gamma)>0$. Inserting this into \eqref{further_uno} yields
$$
\Vert f(x) - v_\textbf{a}\Vert_V \leqslant \sqrt{(\delta-\sigma)} e^{C'' N} \Vert f\Vert_\infty,
$$
for some suitable constant $C'' = C''(\Gamma)>0$, completing the proof.
\end{proof}

\subsection{Tensor power trick}\label{sec:tensorProductTrick}
Given a unitary representation $\rho \colon \Gamma \to \mathrm{U}(V)$ and $k\in \mathbb{N}$, the $k$-th tensor power
$$
\rho^{\otimes k}= \underbrace{\rho\otimes \cdots \otimes \rho}_{\text{$k$ times}}
$$
yields a new representation of $\Gamma$ with representation space
$$
V^{\otimes k} = \underbrace{V\otimes \cdots \otimes V}_{\text{$k$ times}},
$$
by defining
$$\rho^{\otimes k}(\gamma)(v_1 \otimes \cdots\otimes v_k) \defeq (\rho(\gamma)v_1 )\otimes \cdots\otimes (\rho(\gamma)v_k), \quad \gamma\in \Gamma
$$
for all $v_1, \dots,  v_k \in V$, and extending by linearity. We turn $V^{\otimes k}$ into an inner product space by defining
$$
\langle v_1 \otimes \cdots\otimes v_k , w_1 \otimes \cdots\otimes w_k\rangle_{V^{\otimes k}} \defeq \prod_{j=1}^{k}\langle v_j,w_j\rangle_V.
$$
for all $v_1, \dots,  v_k , w_1, \dots, w_k \in V$, and again extending by linearity. The representation $\rho^{\otimes k}$ is unitary with respect to this inner product. Furthermore, the norm $\Vert \cdot\Vert_{V^{\otimes k}}$ induced by $\langle \cdot, \cdot\rangle_{V^{\otimes k}}$ satisfies
\begin{equation}\label{prodtensorrule}
\Vert v_1 \otimes \cdots\otimes v_k\Vert_{V^{\otimes k}} = \prod_{j=1}^{k}\Vert v_j\Vert_V.
\end{equation}
We will drop the subscripts from the inner product and from the norm whenever it is clear which space is being referred to.

\begin{lemma}[Tensor power trick]\label{tensor_product_trick}
Let $s=\sigma+it$ with $\sigma\in [0,\delta]$ and suppose that $F\in H^2(D, V)$ is a 1-eigenfunction of $\mathcal{L}_{s,\rho}$. Set $f = \varphi_\sigma^{-1} F$. Choose $j\in [2m]$ and $x\in I_j$ such that
$$
\Vert f(x)\Vert_V = \Vert f\Vert_\infty.
$$
Fix $k\in \mathbb{N}$, let $(\nu,W) = (\rho^{\otimes k},V^{\otimes k})$ be the $k$-th tensor power representation and put $h=f^{\otimes k}$. Then we have
\begin{equation}
\left\Vert h(x) - \widetilde{\mathcal{L}}_{\nu, \sigma+itk}^N h(x)\right\Vert_W \leqslant k \sqrt{C N(\delta-\sigma)} \Vert f\Vert_{\infty}^{k},
\end{equation}
where the constant $C>0$ depends only on $\Gamma$.
\end{lemma}

We will use the following elementary estimate.

\begin{lemma}\label{elementary_tensor_bound} For all $v,w \in V$ we have
$$
\Vert v^{\otimes k} - w^{\otimes k} \Vert \leqslant k (\max\{ \Vert v\Vert, \Vert w\Vert \})^{k-1}\Vert v-w \Vert.
$$
\end{lemma}

\begin{proof}
Without loss of generality we may assume that $\Vert v\Vert \leqslant 1$ and $\Vert w\Vert \leqslant 1$. For $k=1$ the inequality is trivial. For $k\geqslant 2$ we can write
$$
v^{\otimes k} - w^{\otimes k} = v^{\otimes (k-1)} \otimes (v - w) + \left( v^{\otimes (k-1)} - w^{\otimes (k-1)} \right) \otimes w.
$$
Using the triangle inequality and \eqref{prodtensorrule} gives
\begin{align*}
\Vert v^{\otimes k} - w^{\otimes k} \Vert &\leqslant \Vert v\Vert^{k-1} \Vert v - w\Vert +  \Vert v^{\otimes (k-1)} - w^{\otimes (k-1)}\Vert \Vert w\Vert\\
&\leqslant \Vert v - w\Vert + \Vert v^{\otimes (k-1)} - w^{\otimes (k-1)}\Vert.
\end{align*}
The result now follows by induction.
\end{proof}

\begin{proof}[Proof of Lemma \ref{tensor_product_trick}]
We claim that
\begin{equation}\label{esttensor}
\left\Vert h(x) - \gamma_{\textbf{a}}'(x)^{itk} \nu(\gamma_\textbf{a})^{-1} h(\gamma_\textbf{a}(x)) \right\Vert \leqslant k \Vert f\Vert_{\infty}^{k-1}\Vert f(x)-v_\textbf{a} \Vert
\end{equation}
for all $\textbf{a}\in W_N^j$. This can be derived by applying Lemma \ref{elementary_tensor_bound} to 
$$
\text{$w = f(x)$ and $v = \gamma_{\textbf{a}}'(x)^{it} \rho(\gamma_\textbf{a})^{-1} f(\gamma_\textbf{a}(x)) $,}
$$
and observing that
$$
\text{$w^{\otimes k} = h(x)$ and $v^{\otimes k} = \gamma_{\textbf{a}}'(x)^{itk} \nu(\gamma_\textbf{a})^{-1} h(\gamma_\textbf{a}(x))$}.
$$
Now note that we can write
$$
h(x) - \widetilde{\mathcal{L}}_{\nu, \sigma+itk}^N h (x) = \sum_{\textbf{a}\in W_N^j} w_{\textbf{a},\sigma}(x)\left( h(x) - \gamma_{\textbf{a}}'(x)^{itk} \nu(\gamma_\textbf{a})^{-1} h(\gamma_\textbf{a}(x)) \right).
$$
Thus, from \eqref{esttensor} it follows that
\begin{align*}
\left\Vert h(x) - \widetilde{\mathcal{L}}_{\nu, \sigma+itk}^N h (x)\right\Vert &\leqslant  \sum_{\textbf{a}\in W_N^j} w_{\textbf{a},\sigma}(x)\left\Vert h(x) - \gamma_{\textbf{a}}'(x)^{itk} \nu(\gamma_\textbf{a})^{-1} h(\gamma_\textbf{a}(x)) \right\Vert\\
&\leqslant  k \Vert f\Vert_{\infty}^{k-1} \sum_{\textbf{a}\in W_N^j} w_{\textbf{a},\sigma}(x) \Vert f(x)-v_\textbf{a} \Vert.
\end{align*}
By Proposition \ref{prop:strict_convexity}, the sum on the last line is bounded by $\sqrt{C N(\delta-\sigma)} \Vert f\Vert_{\infty}$, so we obtain
$$
\left\Vert h(x) - \widetilde{\mathcal{L}}_{\nu, \sigma+itk}^N h(x)\right\Vert \leqslant k \sqrt{C N(\delta-\sigma)} \Vert f\Vert_{\infty}^{k}, 
$$
as claimed.
\end{proof}

\subsection{High--Frequency Norm Estimate}\label{sec:hf_norm_estimate}
We wish to estimate the $C^1$-norm of $\widetilde{\mathcal{L}}_{\delta+it,\nu}^N$ when $\vert t\vert$ is large. Throughout this section $(\nu,W)$ is a finite-dimensional, unitary representation of $\Gamma.$ Let $\langle\cdot,\cdot\rangle_W $ be the inner product of $W$ with respect to which $\nu$ is unitary, and let $\Vert v\Vert_W = \sqrt{\langle v,v\rangle_W}$ be the induced norm. Let $C^1(I,W)$ be the set of all $f\colon I\to W$ that are  continuously differentiable. We endow this space with the norm 
$$
\Vert f\Vert_{C^1(I)} = \Vert f\Vert_{\infty}+ \Vert f'\Vert_{\infty},
$$
where the maximum norm is given as usual by $\Vert f\Vert_\infty = \sup_{x\in I} \Vert f(x)\Vert_W. $ To define the derivative of $f\in C^1(I,W)$, fix an orthonormal basis $e_1,\dots, e_d$ of $W$, and write
$$
f = \sum_{k=1}^d f_k e_k
$$
where $f_k (\cdot) = \langle f(\cdot),e_k\rangle_W\in C^1(I).$ The derivative is then given by
$$
f' = \sum_{k=1}^d f_k' e_k.
$$
One easily verifies that $f'$ (and hence $\Vert f\Vert_{C^1}$) does not depend on the chosen basis.

The main result of this subsection is the following:

\begin{prop}[High-frequency norm estimate]\label{prop:hf_norm_estimate}
There are positive constants $ c, \tilde{\eta}, T_0$, depending only on $\Gamma$, such that for all $s = \sigma + it$ with $\sigma > \delta - \tilde{\eta}$ and $\vert t\vert \geqslant T_0$, the following inequality holds for $N(t) = \lceil c \log \vert t \vert \rceil$:
\begin{equation}
\Vert \widetilde{\mathcal{L}}_{\sigma+it,\nu}^{N(t)} h\Vert_{C^1(I)} \leqslant \vert t\vert^{-1} \Vert h\Vert_{C^1(I)}.
\end{equation}
We write $\lceil x\rceil$ for the least integer greater than or equal to $x$.
\end{prop}

We stress that the constants in Proposition \ref{prop:hf_norm_estimate} are independent of the representation $(\nu,W)$. 

The main technical ingredient in the proof of this proposition is a so-called ``uniform Dolgopyat'' estimate, by reference to Dolgopyat's work on Anosov flows \cite{Dol98}. Here, ``uniformity'' is meant with respect to the representation $(\nu,W)$. To state this estimate, we introduce the following warped norm on $C^1(I,W)$:
\begin{equation}
\Vert f\Vert_{(t)} \defeq \Vert f \Vert_\infty + \frac{1}{\vert t\vert} \Vert f'\Vert_\infty.
\end{equation}

\begin{prop}[Uniform Dolgopyat estimate, Proposition 7.3 in \cite{NaudMagee}]\label{prop:MN_input} There are positive constants $\beta,c,\eta_0,C,T_0,$ depending only on $\Gamma$, such that for all $f\in C^1(I,W)$, $\vert t\vert \geqslant T_0$, and $\sigma > \delta -\eta_0$, the following inequality holds with $N = \lceil c \log \vert t\vert\rceil$: 
\begin{equation}
\int_{\Lambda} \Vert \widetilde{\mathcal{L}}_{\sigma+it,\nu}^N f(x)\Vert_W^2 d\mu_{\delta}(x) \leqslant C \vert t\vert^{-\beta} \Vert f\Vert_{(t)}^2.
\end{equation}
Here, $\mu_\delta$ is the Ruelle--Perron--Frobenius measure from Proposition \ref{Ruelle_Perron_Frobenius}.
\end{prop}

We should note that Proposition \ref{prop:MN_input}, which we will use as a black box, was stated in \cite{NaudMagee} for $\mathcal{L}_{\sigma+it,\nu}$ rather than $\widetilde{\mathcal{L}}_{\sigma+it,\nu}$. However, one easily verifies that the estimate for $\widetilde{\mathcal{L}}_{\sigma+it,\nu}$ follows from the one for $\mathcal{L}_{\sigma+it,\nu}$, up to adjusting the constant $C.$ Proposition \ref{prop:MN_input} is a consequence of a a uniform non-integrability result and a deep estimate of Bourgain--Dyatlov \cite{Bourgain_Dyatlov} on oscillatory integrals over $\Lambda$.

While the paper of Magee--Naud \cite{NaudMagee} does not explicitly state Proposition \ref{prop:hf_norm_estimate}, it can be derived from Proposition \ref{prop:MN_input} together with some additional estimates provided in their work. We record the argument in this section for the reader's convenience. First we need some standard estimates.

\begin{lemma}\label{technical_estimates} 
For every finite-dimensional, unitary representation $\nu \colon \Gamma\to \mathrm{U}(W)$, for every function $h\in C^1(I,W)$, for all $t\in \mathbb{R}$, $\sigma \in [0,\delta]$, and $N\in \mathbb{N}$, the following estimates hold:
\begin{enumerate}[(1)]
\item \label{Part_1_technical}(Lasota-Yorke estimate)
$$
\Vert (\widetilde{\mathcal{L}}_{\sigma+it,\nu}^N h)'\Vert_{\infty} \ll (1+\vert t\vert) \Vert h\Vert_{\infty} + e^{-cN} \Vert h'\Vert_{\infty}
$$
\item \label{Part_2_technical}(Bound for maximum norm)
$$
\Vert \widetilde{\mathcal{L}}_{\sigma + it,\nu}^N h\Vert_{\infty} \ll e^{CN(\delta-\sigma)} \int_{\Lambda} \Vert h(x)\Vert_W d\mu_{\delta}(x)  + e^{-cN} \Vert h'\Vert_{\infty}.
$$
\end{enumerate}
The implied constants as well as the constants $C>0$ and $c>0$ depend only on $\Gamma$.
\end{lemma}

\begin{proof}
Part \eqref{Part_1_technical} is well-know in the thermodynamic literature, at least for the trivial twist $\nu = \textbf{1}$. For general $\nu$, we refer to \cite[Lemma 7.2]{NaudMagee}. The proof can be sketched as follows. Using \eqref{normalizedfull} we can write down the formula for $\widetilde{\mathcal{L}}_{\sigma+it,\nu}^N h$. We then differentiate this formula using the usual product and chain rules and apply the triangle inequality. The resulting terms can be estimated using the definition of the weights \eqref{weights} together with the bounded distortion and uniform contraction properties. Uniformity with respect to $(\nu,W)$ comes from the unitarity of representation $\nu$, which says that $\Vert \nu(\gamma)\Vert = 1$ for all $\gamma\in \Gamma.$

We now concentrate on Part \eqref{Part_2_technical}. The triangle inequality and the unitarity of $\nu$ yield
\begin{equation}\label{triangleIneqW}
\Vert \widetilde{\mathcal{L}}_{\sigma+it,\nu}^N h(x)\Vert \leqslant \widetilde{\mathcal{L}}_{\sigma}^N u(x),
\end{equation}
where 
$$
u(x) \defeq \Vert h(x)\Vert_W.
$$
Given any index $j\in [2m]$ and any two points $x,y\in I_j$, write
\begin{equation}\label{eqBefMeanVal}
\widetilde{\mathcal{L}}_{\sigma}^N u(x) = \sum_{\textbf{a}\in W_N^j} w_{\textbf{a},\sigma}(x) u(\gamma_\textbf{a}(y)) + \sum_{\textbf{a}\in W_N^j} w_{\textbf{a},\sigma}(x) \left( u(\gamma_\textbf{a}(x))-u(\gamma_\textbf{a}(x)) \right).
\end{equation}
From the mean-value theorem it follows that
\begin{equation}
\Vert u(\gamma_\textbf{a}(x))-u(\gamma_\textbf{a}(y))\Vert \ll \Vert \gamma_\textbf{a}'\Vert_{I_j, \infty} \Vert u'\Vert_\infty.
\end{equation}
Using the definition of the weights $w_{\textbf{a},\sigma}$ in \eqref{weights} together with the distortion estimates from Proposition \ref{prop:DE}, we obtain
\begin{equation}
w_{\textbf{a},\sigma}(x) \ll e^{CN(\delta-\sigma)} \gamma_{\textbf{a}}'(x)^{\delta} \ll e^{CN(\delta-\sigma)} \gamma_{\textbf{a}}'(y)^{\delta},
\end{equation}
where both $C>0$ and the implied constants depend only on $\Gamma.$ Inserting the previous two bounds into \eqref{eqBefMeanVal} yields
\begin{align*}
\widetilde{\mathcal{L}}_{\sigma}^N  u(x) &\ll e^{CN(\delta-\sigma)} \sum_{\textbf{a}\in W_N^j} \gamma_{\textbf{a}}'(y)^{\delta} u(\gamma_\textbf{a}(y)) + \sum_{\textbf{a}\in W_N^j} w_{\textbf{a},\sigma}(x) \Vert\gamma_\textbf{a}'\Vert_{I_j, \infty} \Vert u'\Vert_\infty \\
&= e^{CN(\delta-\sigma)} \mathcal{L}_{\delta}^N  u(y) + \sum_{\textbf{a}\in W_N^j} w_{\textbf{a},\sigma}(x) \Vert \gamma_\textbf{a}'\Vert_{I_j, \infty} \Vert u'\Vert_\infty.
\end{align*}
To estimate the sum on the right, recall that uniform hyperbolicity implies that for all $\textbf{a}\in W_N^j$ we have $\Vert\gamma_\textbf{a}'\Vert_{I_j, \infty} \ll e^{-cN}$ for some $c=c(\Gamma)>0$, whence
\begin{equation}
\widetilde{\mathcal{L}}_{\sigma}^N  u(x) \ll   e^{CN(\delta-\sigma)} \mathcal{L}_\delta^N u(y) + e^{-cN} \Vert u'\Vert_\infty.
\end{equation}
Integrating both sides of this bound over $I_j$ with respect to the $y$-variable against the measure $\mu_\delta$ yields
\begin{align*}
\mu_\delta(I_j) \widetilde{\mathcal{L}}_{\sigma}^N  u(x) &\ll  e^{CN(\delta-\sigma)} \int_{I_j} \mathcal{L}_\delta^N u(y) d\mu_\delta(y) + e^{-cN}\Vert u'\Vert_\infty\\
&= e^{CN(\delta-\sigma)} \int_{\Lambda} u(y) d\mu_\delta(y) + e^{-cN}\Vert u'\Vert_\infty,
\end{align*}
where for the last equality we used the defining property of $\mu_{\delta}$ from Proposition \ref{Ruelle_Perron_Frobenius}. By basic properties of $\mu_{\delta}$ it follows that $\mu_{\delta}(I_j) >0 $ for every $j\in [2m],$ so after dividing by $\mu_\delta(I_j)$ we obtain
\begin{equation}\label{abcde}
\widetilde{\mathcal{L}}_{\sigma}^N  u(x) \ll e^{CN(\delta-\sigma)} \int_{\Lambda} u(y) d\mu_\delta(y) + e^{-cN}\Vert u'\Vert_\infty.
\end{equation}
Recalling that $u(x)= \Vert h(x)\Vert_W$ we have
$$
2 u'(x) u(x) = \frac{d}{dx} u(x)^2 = 2 \mathrm{Re} \langle h(x),h'(x)\rangle_W.
$$
Applying the Cauchy--Schwarz inequality in $W$ gives $\Vert u'\Vert_\infty \ll \Vert h'\Vert_\infty.$ Hence, it follows from \eqref{triangleIneqW} and \eqref{abcde} that
$$
\Vert \widetilde{\mathcal{L}}_{\delta+it}^N h(x)\Vert_{W}  \leqslant \widetilde{\mathcal{L}}_{\sigma}^N  u(x) \ll e^{CN(\delta-\sigma)} \int_{\Lambda} \Vert h\Vert_W(y) d\mu_\delta(y) + e^{-cN}\Vert h'\Vert_\infty.
$$
As $x\in I$ was chosen arbitrarily, we can substitute the leftmost side with $\Vert \widetilde{\mathcal{L}}_{\delta+it}^N h\Vert_{\infty} = \sup_{x\in I} \Vert \widetilde{\mathcal{L}}_{\delta+it}^N h(x)\Vert_{W}$, thereby completing the proof.
\end{proof}

\begin{proof}[Proof of Proposition \ref{prop:hf_norm_estimate}]
For future reference, note that we have the following a priori bounds:
\begin{equation}\label{apririest1}
\Vert \widetilde{\mathcal{L}}_{\sigma+it,\nu}^{N} h \Vert_\infty \ll \Vert h\Vert_\infty
\end{equation}
and
\begin{equation}\label{apririest2}
\Vert (\widetilde{\mathcal{L}}_{\sigma+it,\nu}^{N} h)' \Vert_\infty \ll \vert t\vert \Vert h\Vert_{C^1(I)}.
\end{equation}
Now let $N \approx \log \vert t\vert$ be as in Proposition \ref{prop:MN_input} and let $M$ be some positive integer to be chosen further below. Using the maximum norm bound in Lemma \ref{technical_estimates} together with \eqref{apririest2} yields
\begin{align*}
\Vert \widetilde{\mathcal{L}}_{\sigma+it,\nu}^{M+N} h \Vert_\infty &= \Vert \widetilde{\mathcal{L}}_{\sigma+it,\nu}^{M} (\widetilde{\mathcal{L}}_{\sigma+it,\nu}^{N} h) \Vert_\infty\\
&\ll e^{CM(\delta-\sigma)} \int_{\Lambda} \Vert (\widetilde{\mathcal{L}}_{\sigma+it,\nu}^{N}h)(x)\Vert d\mu_\delta(x) + e^{-cM} \Vert (\widetilde{\mathcal{L}}_{\sigma+it,\nu}^{N} h)' \Vert_\infty\\
&\ll e^{CM(\delta-\sigma)} \int_{\Lambda} \Vert \widetilde{\mathcal{L}}_{\sigma+it,\nu}^{N}h(x)\Vert d\mu_\delta(x) + e^{-cM} \vert t\vert \Vert h\Vert_{C^1(I)}.
\end{align*}
Using the Cauchy--Schwarz inequality gives
\begin{equation}
\Vert \widetilde{\mathcal{L}}_{\sigma+it,\nu}^{M+N} h \Vert_\infty \ll e^{CM(\delta-\sigma)} \left( \int_{\Lambda} \Vert \widetilde{\mathcal{L}}_{\sigma+it,\nu}^{N}h(x)\Vert^2 d\mu_\delta(x) \right)^{1/2} + e^{-cM} \vert t\vert \Vert h\Vert_{C^1(I)}.
\end{equation}
By Proposition \ref{prop:MN_input} it follows that for all $\sigma < \delta$ sufficiently close to $\delta$,
\begin{equation}
\Vert \widetilde{\mathcal{L}}_{\sigma+it,\nu}^{M+N} h \Vert_\infty \ll e^{CM(\delta-\sigma)} \vert t\vert^{-\beta/2} \Vert h\Vert_{(t)} + e^{-cM} \vert t\vert \Vert h\Vert_{C^1(I)}.
\end{equation}
Hence, for any $\kappa>0$ large enough we can find some $\eta'> 0$ with the following property: for $M = \lceil \kappa \log \vert t\vert\rceil$ and $\sigma > \delta - \eta'$,
\begin{equation}\label{firstNormEst}
\Vert \widetilde{\mathcal{L}}_{\sigma+it,\nu}^{M+N} h \Vert_\infty \ll \vert t\vert^{-\beta/3} \Vert h\Vert_{(t)}.
\end{equation}
Applying the Lasota--Yorke estimate in Lemma \ref{technical_estimates} and then \eqref{apririest2} leads to
\begin{align*}
\Vert (\widetilde{\mathcal{L}}_{\sigma+it,\nu}^{2(M+N)} h)' \Vert_\infty &= \Vert (\widetilde{\mathcal{L}}_{\sigma+it,\nu}^{M+N}(\widetilde{\mathcal{L}}_{\sigma+it,\nu}^{M+N} h))' \Vert_\infty\\
&\ll \vert t\vert \Vert \widetilde{\mathcal{L}}_{\sigma+it,\nu}^{M+N} h \Vert_\infty + e^{-c(N+M)}\Vert (\widetilde{\mathcal{L}}_{\sigma+it,\nu}^{M+N} h)' \Vert_\infty\\
&\ll \vert t\vert \Vert \widetilde{\mathcal{L}}_{\sigma+it,\nu}^{M+N} h \Vert_\infty + e^{-c(N+M)} \vert t\vert^2 \Vert  h \Vert_{(t)}.
\end{align*}
Inserting \eqref{firstNormEst} into the previous line and increasing $\kappa$ if necessary, this gives
\begin{equation}\label{secondNormEst}
\Vert (\widetilde{\mathcal{L}}_{\sigma+it,\nu}^{2(M+N)} h)' \Vert_\infty \ll \vert t\vert^{1-\beta/3} \Vert  h \Vert_{(t)}.
\end{equation}
Using \eqref{firstNormEst} and \eqref{secondNormEst}, as well as the a priori estimate \eqref{apririest1}, it follows that
\begin{align*}
\Vert \widetilde{\mathcal{L}}_{\sigma+it,\nu}^{2(M+N)} h \Vert_{(t)} &= \Vert \widetilde{\mathcal{L}}_{\sigma+it,\nu}^{2(M+N)} h \Vert_{\infty}  + \frac{1}{\vert t\vert} \Vert (\widetilde{\mathcal{L}}_{\sigma+it,\nu}^{2(M+N)} h)' \Vert_\infty\\
&\ll \Vert \widetilde{\mathcal{L}}_{\sigma+it,\nu}^{M+N} h \Vert_{\infty}  + \vert t\vert^{-\beta/3} \Vert  h \Vert_{(t)}\\
&\ll \vert t\vert^{-\beta/3} \Vert  h \Vert_{(t)}.
\end{align*}
To summarize, we have shown so far that
$$
\Vert \widetilde{\mathcal{L}}_{\sigma+it,\nu}^{2(M+N)} h \Vert_{(t)} \ll \vert t\vert^{-\beta/3} \Vert  h \Vert_{(t)}.
$$
We can apply this bound repeatedly $k$ times for any $k\in \mathbb{N}$:
$$
\Vert \widetilde{\mathcal{L}}_{\sigma+it,\nu}^{2k(M+N)} h \Vert_{(t)} \ll_k \vert t\vert^{-k\beta/3} \Vert  h \Vert_{(t)}.
$$
But now, using the trivial estimate
\begin{equation}
\Vert g\Vert_{C^1(I)} \leqslant \vert t\vert \Vert g\Vert_{(t)} \leqslant \vert t\vert \Vert g\Vert_{C^1(I)},
\end{equation}
valid for any $g\in C^1(I,W)$, we get
\begin{align*}
\Vert \widetilde{\mathcal{L}}_{\sigma+it,\nu}^{2k(M+N)} h \Vert_{C^1(I)} &\leqslant \vert t\vert \Vert \widetilde{\mathcal{L}}_{\sigma+it,\nu}^{2k(M+N)} h \Vert_{(t)}\\
&\ll_k  \vert t\vert^{-k\beta/3+1} \Vert  h \Vert_{(t)}\\
&\leqslant \vert t\vert^{-k\beta/3+1} \Vert  h \Vert_{C^1(I)}.
\end{align*}
Taking $k = \lceil 6\beta^{-1}\rceil$ gives
\begin{equation}
\Vert \widetilde{\mathcal{L}}_{\sigma+it,\nu}^{2k(M+N)} h \Vert_{C^1(I)} \leqslant \vert t\vert^{-1} \Vert  h \Vert_{C^1(I)},
\end{equation}
completing the proof of Proposition \ref{prop:hf_norm_estimate}.
\end{proof}

\subsection{Basic a priori estimate}
We continue with a basic estimate. It controls the maximum norm of $f'$ in terms of the maximum norm of $f$, where $f$ is the normalized 1-eigenfunction of the twisted transfer operator.

\begin{lemma}[A priori estimate]\label{estimate_der_eigenfct} 
Let $s=\sigma+it$ with $\sigma\in [0,\delta]$, let $(\nu,W)$ be a finite-dimensional, unitary representation of $\Gamma$, and suppose that $F\in H^2(D, W)$ is a 1-eigenfunction of $\mathcal{L}_{s,\nu}$. Set $f = \varphi_\sigma^{-1} F.$ Then there are constants $A = A(\Gamma)>0$ and $C = C(\Gamma)>0$ such that
\begin{equation*}
\Vert f'\Vert_\infty \leqslant C (1+\vert t\vert)^{A} \Vert f\Vert_{\infty}.
\end{equation*}
\end{lemma}

\begin{proof}
Consider a 1-eigenfunction $F$ of $\mathcal{L}_{s,\nu}$. For any $N\in \mathbb{N}$ we have
$$
\mathcal{L}_{s,\nu}^N F = F.
$$
Upon normalizing this can be expressed equivalently as
$$
e^{-NP(\sigma)} f = \widetilde{\mathcal{L}}_{s,\nu} f.
$$
Using the Lasota-Yorke estimate (Part \ref{Part_1_technical} of Lemma \ref{technical_estimates}) we get
$$
e^{-NP(\sigma)} \Vert f'\Vert_\infty \leqslant C' (1+\vert t\vert) \left( \Vert f\Vert_{\infty} + e^{-(c+P(\sigma))N} \Vert f'\Vert_\infty \right)
$$
for some constants $C' = C'(\Gamma)>0$ and $c=c(\Gamma)>0$, or equivalently, 
$$
\Vert f'\Vert_\infty \leqslant C' (1+\vert t\vert) \left( e^{N P(\sigma)} \Vert f\Vert_{\infty} + e^{-cN} \Vert f'\Vert_{\infty} \right).
$$
Taking $N = \lceil C'' \log (1+\vert t\vert)\rceil$ with $C''=C''(\Gamma) > 0$ so large that $$ C' (1+\vert t\vert) e^{-cN} < \frac{1}{2} $$ holds, we obtain
$$
\Vert f'\Vert_\infty \leqslant C' (1+\vert t\vert)  e^{N P(\sigma)} \Vert f\Vert_{\infty} + \frac{1}{2}  \Vert f'\Vert_{\infty}.
$$
This can be rearranged to give
$$
\Vert f'\Vert_\infty \leqslant 2 C' (1+\vert t\vert) e^{N P(\sigma)} \Vert f\Vert_{\infty} \leqslant C_0 (1+\vert t\vert)^{A}\Vert f\Vert_{\infty},
$$
for some $A=A(\Gamma)>0$, as claimed.
\end{proof}

\subsection{Finishing the proof of Theorem \ref{thm:dist_zero_free:reformul}}\label{sec:finishingDistZero}
We now finish the proof of Theorem \ref{thm:dist_zero_free:reformul}. Fix a unitary representation $\rho\colon \Gamma \to \mathrm{U}(V)$ and suppose that $Z_\Gamma(s,\rho)$ vanishes at $s=\sigma + it$. By Proposition \ref{Fredholm_identity} and standard properties of Fredholm determinants, zeros of $Z_\Gamma(s,\rho)$ correspond to non-trivial 1-eigenfunctions of the operator $\mathcal{L}_{s,\rho}$, i.e., there is some non-zero $F\in C^1(I,V)$ satisfying
\begin{equation}\label{unoProof}
\mathcal{L}_{s,\rho}^N F = F
\end{equation}
for all $N\in \mathbb{N}$. Writing $f = \varphi_\sigma^{-1} F$ and letting $\widetilde{\mathcal{L}}_{s,\rho}$ denote the normalized operator as in §\ref{sec:norm_to}, equation \eqref{unoProof} becomes
\begin{equation}\label{unoProofmod}
e^{-N P(\sigma)} f = \widetilde{\mathcal{L}}_{s,\rho}^N f.
\end{equation}
We may assume without loss of generality that $t\neq 0.$ Let $T_0$ be the constant from Proposition \ref{prop:hf_norm_estimate}. We may assume for the rest of this proof that $T_0>1.$ Let us first deal with the case $\vert t\vert \geqslant T_0>1$ Then, by Proposition \ref{prop:hf_norm_estimate}, there are $\tilde{\eta} > 0$ and $c>0$, depending only on $\Gamma$, such that for all $\sigma > \delta - \tilde{\eta}$ the following inequality holds with $N_0 = \lceil c \log \vert t\vert\rceil$:
$$
\Vert \widetilde{\mathcal{L}}_{s,\rho}^{N_0} f \Vert_{C^1(I,V)} \leqslant \vert t\vert^{-1} \Vert f\Vert_{C^1(I,V)}.
$$
Combining this with \eqref{unoProofmod} and using the fact that $f\neq 0$ it follows that $e^{- N_0 P(\sigma)} \leqslant \vert t\vert^{-1}$. Upon rearranging, this yields
\begin{equation}
1 \leqslant \vert t\vert^{-1+cP(\sigma)},
\end{equation}
which implies
\begin{equation}\label{forces}
P(\sigma) \geqslant c^{-1}.
\end{equation}
Since $\sigma \mapsto P(\sigma)$ is strictly decreasing with a zero at $\sigma = \delta$, this forces
\begin{equation}\label{part1Gap}
\sigma \leqslant \delta -\eta_1
\end{equation}
for some constant $\eta_1 = \eta_1(\Gamma) > 0$. 

Now assume alternatively that $\vert t\vert < T_0.$ In this case we apply the tensor power trick in §\ref{sec:tensorProductTrick}. Put
$$
k \defeq \left\lceil \frac{2T_0}{\vert t\vert} \right\rceil \in \mathbb{N}
$$
and define the $k$-th tensor powers
\begin{equation}\label{choice_h}
\text{$(\nu,W) \defeq (\rho^{\otimes k}, V^{\otimes k})$ and $h \defeq f^{\otimes k} \in C^1(I,W)$.}
\end{equation}
Clearly, we have $\frac{2T_0}{\vert t\vert} \leqslant k \leqslant \frac{2T_0}{\vert t\vert}+1 $, so $\vert t\vert < T_0$ implies
$$
2 < 2T_0 \leqslant k\vert t\vert \leqslant 3 T_0.
$$
Hence, by Proposition \ref{prop:hf_norm_estimate}, there are $N_0 = O(1)$ and $\tilde{\eta} > 0$ with the following property: for all $\sigma > \delta-\tilde{\eta}$ and all $g\in C^1(I,W)$,
$$
\Vert \widetilde{\mathcal{L}}_{\sigma+itk,\nu}^{N_0} g\Vert_{C^1(I)} \leqslant (k \vert t\vert)^{-1} \Vert g\Vert_{C^1(I)} \leqslant \frac{1}{2}\Vert g\Vert_{C^1}.
$$
Applying this repeatedly $l$ times for any $l\in \mathbb{N}$ it follows that
\begin{equation}\label{theBound}
\Vert \widetilde{\mathcal{L}}_{\sigma+itk,\nu}^{l N_0} g\Vert_{C^1(I)} \leqslant 2^{-l} \Vert g\Vert_{C^1(I)}.
\end{equation}
To proceed, let us estimate the $C^1$-norm of $h$. Given vector spaces $V_1$ and $V_2$, observe that by the definition of the tensor product, the following holds for all $f_1 \in C(I,V_1)$ and $f_2 \in C(I,V_2)$:
\begin{equation}\label{trule}
\Vert f_1 \otimes f_2\Vert_\infty = \Vert f_1 \Vert_\infty \Vert f_2\Vert_\infty.
\end{equation}
By repeatedly applying this rule to $h = f^{\otimes k}$ we obtain
\begin{equation}
\Vert h\Vert_{\infty} = \Vert f\Vert_{\infty}^k.
\end{equation}
Moreover, the derivative of $h$ is equal to
$$
h' = \sum_{i=1}^k h_i, \quad h_i \defeq \underbrace{f\otimes\cdots \otimes f' \otimes \cdots\otimes f}_{\text{derivative at $i$-th place}}.
$$
This follows from applying the product rule to the following identity: for any $\textbf{v} = v_1 \otimes \cdots \otimes v_k$ and $\textbf{w} = w_1 \otimes \cdots \otimes w_k$, the inner product $\langle  h(x) \textbf{v}, \textbf{w} \rangle$ can be expressed as the product of individual inner products $\langle  f(x) v_i, w_i \rangle$:
$$
\langle  h(x) \textbf{v}, \textbf{w} \rangle = \prod_{i=1}^k \langle  f(x) v_i, w_i \rangle.
$$
Using \eqref{trule} once again it follows that $\Vert h_i\Vert_\infty \leqslant \Vert f'\Vert_\infty \Vert f\Vert_\infty^{k-1}$ and so the triangle inequality yields
\begin{equation}\label{boundForderivH}
\Vert h'\Vert_\infty \leqslant \sum_{i=1}^k \Vert h_i\Vert_\infty \leqslant k \Vert f'\Vert_\infty \Vert f\Vert_\infty^{k-1}.
\end{equation}
By Lemma \ref{estimate_der_eigenfct} we have
$$
\Vert f'\Vert_\infty \ll \Vert f\Vert_\infty,
$$
which when inserted into \eqref{boundForderivH} yields
$$
\Vert h'\Vert_\infty \ll k \Vert f\Vert^k.
$$
We conclude that the $C^1$-norm of $h$ is bounded by 
\begin{equation}
\Vert h\Vert_{C^1(I)} = \Vert h\Vert_\infty + \Vert h'\Vert_\infty  \ll k \Vert f\Vert_{\infty}^k.
\end{equation}
Next, using this bound and applying \eqref{theBound} to $h$ yields
\begin{equation}
\Vert \widetilde{\mathcal{L}}_{\sigma+itk,\nu}^{l N_0} h\Vert_{C^1(I)} \ll 2^{-l} \Vert h\Vert_{C^1(I)} \ll k 2^{-l} \Vert f\Vert_{\infty}^k.
\end{equation}
We may therefore choose some $l\in \mathbb{N}$ of size at most
$$
l = O(\log k) = O(\log (T_0\vert t\vert^{-1}))
$$
such that
\begin{equation}\label{halfnorm}
\Vert \widetilde{\mathcal{L}}_{\sigma+itk,\nu}^{l N_0} h\Vert_{C^1(I)} \leqslant \frac{1}{2} \Vert f\Vert_{\infty}^k.
\end{equation}
Now choose $x\in I$ for which $\Vert f(x)\Vert_V = \Vert f\Vert_\infty$. This choice also gives
$$
\left\Vert h(x) \right\Vert_W = \left\Vert f(x) \right\Vert_V^k = \Vert f\Vert_\infty^k.
$$
The triangle inequality and \eqref{halfnorm} yield
\begin{align*}
\left\Vert h(x) - \widetilde{\mathcal{L}}_{\sigma+itk,\nu}^{N_0 l} h(x) \right\Vert_W &\geqslant \Vert h(x)\Vert_W - \Vert \widetilde{\mathcal{L}}_{\sigma+itk,\nu}^{l N_0} h(x) \Vert_W\\
&\geqslant \Vert f\Vert_\infty^k - \Vert \widetilde{\mathcal{L}}_{\sigma+itk,\nu}^{l N_0} h(x) \Vert_W\\
&\geqslant \frac{1}{2} \Vert f\Vert_{\infty}^k.
\end{align*}
On the other hand, Lemma \ref{tensor_product_trick} tells us that
$$
\left\Vert h(x) - \widetilde{\mathcal{L}}_{\sigma+itk,\nu}^{N_0 l} h(x) \right\Vert_W \leqslant k \sqrt{C N_0 l(\delta-\sigma)} \Vert f\Vert_{\infty}^k. 
$$
Combining the previous two estimates, we arrive at
\begin{equation}
\frac{1}{2} \Vert f\Vert_{\infty}^k \leqslant k \sqrt{C N_0 l(\delta-\sigma)} \Vert f\Vert_{\infty}^k.
\end{equation}
Dividing both sides by $\Vert f\Vert_{\infty}^k \neq 0$ yields
\begin{equation}
\frac{1}{2}  \leqslant k \sqrt{C N_0 l(\delta-\sigma)},
\end{equation}
or equivalently,
\begin{equation}\label{part2Gap}
\sigma \leqslant \delta - \frac{1}{4 C k^2 N_0 l}.
\end{equation}
Finally, by our choices of $k$ and $l$ we obtain
$$
\frac{1}{4 C k^2 N_0 l} = O_{\Gamma}\left( \frac{\vert t\vert^2}{\log \vert t \vert^{-1}}\right)
$$ 
as $\vert t\vert \to 0$. Combining \eqref{part1Gap} and \eqref{part2Gap}, and taking $\eta_\infty = \min\{ \eta_1,\tilde{\eta}\}$, we conclude that
$$
\sigma \leqslant \delta - \eta(t),
$$
for some function $\eta\colon \mathbb{R}\to \mathbb{R}$ with the following properties:
\begin{itemize}
\item $\eta(0)=0$ and $\eta(t) > 0$ for all $t\neq 0$,
\item $\lim_{\vert t\vert\to \infty} \eta(t) = \eta_\infty$, and
\item $\eta(t) = O_{\Gamma}\left( \frac{\vert t\vert^2}{\log \vert t \vert^{-1}}\right) $ as $\vert t\vert \to 0$.
\end{itemize}
For definiteness, we may take
$$
\eta(t) = \eta_\infty \frac{\xi(t)}{1+\xi(t)}
$$ with $\xi(t) = \frac{\vert t\vert^2}{\log \left(  C + \frac{1}{\vert t\vert} \right)}$ for some suitably large constant $C >1$ depending only upon $\Gamma.$ The proof of Theorem \ref{thm:dist_zero_free:reformul} is complete.

\section{Exploiting the expansion property}\label{sec:exploiting_expander}
Recall the notation from §\ref{sec:preliminaries}. Let $\Gamma$ is a non-elementary Schottky group with generators $S=\{ \gamma_1, \dots, \gamma_m \}$. Let us recall some of the notation. We put $I_j = D_j\cap \R$ where $D_1,\dots, D_{2m}$ are the Schottky disks in the construction of $\Gamma$ and $I = \bigsqcup_{j=1}^{2m} I_j$. The non-identity elements of $\Gamma$ are in bijection to set of reduced words $W \defeq \bigsqcup_{N=0}^\infty W_N$ in the alphabet $[2m]$, where $W_N$ is the set of \textit{reduced} words of length $N$,
$$
W_N = \{ \textbf{a}=a_1\cdots a_N : \text{ $a_i \neq a_{i+1} + m \;(\mathrm{mod}\; 2m)$ for all $i=1,\dots, N-1$} \},
$$
We denote by $W_N^j \subseteq W^N$ the subset of those words not ending with the letter $j$. Moreover, we use $\mathbb{E}_{a\in A} f(a) $ as a shorthand for the average
$$
\frac{1}{\vert A\vert} \sum_{a\in A} f(a).
$$
The goal of this section is to prove estimates for averages of the type
\begin{equation*}
 \mathbb{E}_{\textbf{a}\in W_N^j} \left( \rho(\gamma_\textbf{a})^{-1} h(\gamma_\textbf{a}(x)) \right),
\end{equation*}
where $\rho\colon \Gamma\to \mathrm{U}(V)$ is a finite-dimensional unitary representation and $h\in C^1(I,V)$. These estimates will be crucial in the proof of Theorem \ref{main_theorem}.

\subsection{Representations and sums over words}
Given a finite-dimensional unitary representation $(\rho,V)$ of $\Gamma$ and a finite set of reduced words $Z\subset W$, put
$$
Z[\rho] \defeq \sum_{\textbf{a}\in Z} \rho(\gamma_\textbf{a}).
$$
Note that $Z[\rho]$ is an element of the endomorphism ring $\mathrm{End}(V)$. Applying the triangle inequality yields
$$
\Vert Z[\rho]\Vert \leqslant \vert Z\vert,
$$ 
where we write $\Vert\cdot\Vert = \Vert \cdot\Vert_{V\to V}$ for the usual operator norm.

\begin{prop}[Norm estimate for $W_N$]\label{main_expanding_prop}
Assume that the self-adjoint operator 
$$
T(\rho) = \frac{1}{2m}\sum_{j=1}^{2m} \rho(\gamma_j) = \frac{1}{2m}\sum_{j=1}^{m} \left(  \rho(\gamma_j)+\rho(\gamma_j)^{-1}\right)\in \mathrm{End}(V)
$$
has all its eigenvalues inside $[-1+\epsilon, 1-\epsilon]$. Then the operator norm of $W_N [\rho]$ is bounded by
\begin{equation*}
\Vert W_N [\rho]\Vert \ll e^{-\frac{\epsilon}{4} N} \vert W_N\vert
\end{equation*}
with implied constant depending on $m$ and $\epsilon$ but not on $N.$
\end{prop}

\begin{proof}
Clearly $W_1[\rho] = 2m\, T(\rho). $ Thus, by assumption, the eigenvalues $\lambda_1,\dots,\lambda_d$ of $W_1[\rho]$ satisfy the bound
\begin{equation}\label{eigenvalue_bound_2}
\max_{1\leqslant k\leqslant d} \vert \lambda_k\vert \leqslant 2m(1-\epsilon),
\end{equation}
which we will use later in the proof.

Consider the following formal power series with coefficients in the endomorphism ring $ \mathrm{End}(V):$
\begin{equation}\label{formal_series}
G_\rho (z) \defeq \sum_{N=0}^\infty W_N[\rho] z^N,
\end{equation}
where $W_0[\rho] \defeq I_V$ is the identity operator. An elementary argument shows that
$$
\vert W_N\vert = 2m (2m-1)^{N-1} \asymp (2m-1)^N. 
$$
Together with the trivial bound
$$
\Vert W_N[\rho]\Vert \leqslant \vert W_N\vert,
$$
it follows that the series in \eqref{formal_series} is absolutely convergent with respect to the operator norm whenever $\vert z\vert < \frac{1}{2m-1}$.

Next we derive a closed-form expression for $G_\rho (z)$. To that effect we use the elementary but essential recursion formulas\footnote{These recursion formulas were used by Ihara \cite{Ihara} in his derivation of a closed-form expression of what came to be known as the Ihara zeta function}:
\begin{equation}\label{recursion:1}
W_1[\rho]^2 = W_2[\rho] + 2m \cdot I_V
\end{equation}
and for all $N\geqslant 2$,
\begin{equation}\label{recursion:2}
W_1[\rho] W_N[\rho] = W_{N+1}[\rho] +  (2m-1) W_{N-1}[\rho].
\end{equation}
Multiplying $ G_\rho(z) $ on the right by $ z W_1[\rho] $ and applying \eqref{recursion:1} and \eqref{recursion:2} gives us
\begin{align}
zW_1[\rho] G_\rho (z) &= W_1[\rho] z + W_1[\rho]^2 z^2 + \sum_{N=2}^\infty W_1[\rho] W_N[\rho] z^{N+1}\\
&= W_1[\rho] z + (W_2[\rho] + 2m \cdot I_V ) z^2 \\
&\qquad \qquad + \sum_{N=2}^\infty  W_{N+1}[\rho] z^{N+1} + (2m-1)\sum_{N=2}^\infty  W_{N-1}[\rho] z^{N+1} \label{recurreq}.
\end{align}
Note that 
$$
\sum_{N=2}^\infty  W_{N+1}[\rho] z^{N+1} = G_\rho (z) - W_2[\rho] z^2 - W_1[\rho] z - I_V
$$
and
$$
\sum_{N=2}^\infty  W_{N-1}[\rho] z^{N+1} = z^2 \left( G_\rho (z) - I_V \right).
$$
Inserting these two equations back into \eqref{recurreq} we arrive at
\begin{equation}\label{to_be_rearranged}
zW_1[\rho] G_\rho (z) = (z^2 - 1) I_V + G_\rho (z) \left( 1+(2m-1)z^2 \right),
\end{equation}
which, upon rearranging, gives the closed-form expression
\begin{equation*}
G_\rho (z) = (1-z^2) \left( I_V - W_1[\rho]z+(2m-1)z^2 \right)^{-1}.
\end{equation*}
Since $W_1[\rho]$ is a self-adjoint operator there exists a basis of $V$ with respect to which it acts by the diagonal matrix
$$
W_1[\rho] = \diag ( \lambda_1, \dots, \lambda_d ).
$$
With respect to this basis we have the expression
\begin{equation}\label{powerseriesclosed}
G_\rho (z) = \diag\left( \frac{1-z^2}{1- \lambda_k z+(2m-1)z^2}  \right)_{k=1}^d.
\end{equation}
For each $k\in [d]$ let $\omega^{\pm}_k$ be the pair of complex numbers satisfying 
$$
1- \lambda_k z+(2m-1)z^2 = (1-\omega_k^{+} z) (1-\omega_k^{-} z).
$$
We can then expand each diagonal entry of \eqref{powerseriesclosed} as a power series in $z$ as follows:
$$
\frac{1-z^2}{1- \lambda_k z+(2m-1)z^2} = \frac{1-z^2}{(1-\omega_k^{+} z) (1-\omega_k^{-} z)}  = \sum_{N=0}^{\infty} (\xi_{k,N} - \xi_{k,N-2}) z^N,
$$
where $\xi_{k,-1} = \xi_{k,-2} = 0$ and
\begin{equation}\label{c_k}
\xi_{k,N} = \sum_{l=0}^N \left(\omega_k^{+} \right)^l \left(\omega_k^{-} \right)^{N-l} 
\end{equation}
for all non-negative integers $N$. Comparing coefficients in \eqref{powerseriesclosed} and \eqref{formal_series} yields
$$
W_N[\rho] = \diag\left( \xi_{k,N} - \xi_{k,N-2} \right)_{k=1}^d.
$$
Since the operator norm of a diagonal matrix is equal to the largest absolute values of its entries we obtain
\begin{equation}\label{norm_W_nu}
\Vert W_N[\rho]\Vert  = \max_{1\leqslant k\leqslant d} \vert \xi_{k,N} - \xi_{k,N-2} \vert.
\end{equation}
A direct computation shows that
\begin{equation}\label{direct_computation}
\omega_k^{\pm} = \frac{\lambda_k \pm \sqrt{\lambda_k^2 - 4(2m-1)}}{2}.
\end{equation}
If $\vert \lambda_k\vert \leqslant 2 \sqrt{2m-1}$ it follows directly from this formula that
$$
\vert \omega_k^{\pm}\vert = \sqrt{2m-1}.
$$
Now suppose alternatively that $\vert \lambda_j\vert > 2 \sqrt{2m-1}$. In this case we simply recall from \eqref{eigenvalue_bound_2} that 
$$
\vert \lambda_j\vert \leqslant 2m(1-\epsilon),
$$ 
which implies that
$$
\vert \omega_k^{\pm} \vert \leqslant \frac{\vert \lambda_k\vert + \sqrt{(2m)^2 - 4(2m-1)}}{2} \leqslant (2m-1)\left( 1-\frac{m}{2m-1} \epsilon \right).
$$
In any case we have
\begin{equation}
\vert \omega_k^{\pm} \vert \leqslant (2m-1)(1-\frac{\epsilon}{3}) < (2m-1)e^{-\frac{\epsilon}{3}}.
\end{equation}
Therefore, by \eqref{c_k},
$$
\max_{1\leqslant k\leqslant d} \vert \xi_{k,N} \vert \leqslant N \left( \max_{1\leqslant k\leqslant d} \vert \omega_k^{\pm} \vert \right)^N \leqslant N e^{-\frac{\epsilon}{3}N} (2m-1)^N \ll e^{-\frac{\epsilon}{4} N} \vert W_N\vert.
$$
Returning to \eqref{norm_W_nu} we therefore end up with
$$
\Vert W_N[\rho]\Vert  \leqslant \max_{1\leqslant k\leqslant d} \vert \xi_{k,N} \vert + \max_{1\leqslant k\leqslant d} \vert \xi_{k,N-2} \vert \ll  e^{-\frac{\epsilon}{4} N} \vert W_N\vert,
$$
which completes the proof of Proposition \ref{main_expanding_prop}.
\end{proof}

For every $N\in \mathbb{N}$ and for every pair $(i,j)\in [2m]^2$ consider the set of reduced words of length $N$ starting with $i$ and ending with $j$:
\begin{equation}\label{A_ij}
A_{N}^{i,j} = \{ \textbf{a} = a_1\cdots a_N \in W_N : a_1 = i, a_N = j \}.
\end{equation}
It turns out that in our application it is more useful to have an upper bound for the norm of $A_{N}^{i,j}[\rho]$ rather than one for $W_N[\rho]$.

\begin{cor}[Norm estimate for $A_{N}^{i,j}$]\label{cor_words_decay}
Let $(\rho, V)$ be a unitary representation of $\Gamma$ and assume that $T(\rho)$ has all its eigenvalues in $[-1+\epsilon, 1-\epsilon]$. For all $N\in \mathbb{N}$ and for all $(i,j)\in [2m]^2$ the operator norm of $ A_{N}^{i,j}[\rho]$ is bounded by
\begin{equation*}
\Vert A_{N}^{i,j}[\rho] \Vert  \ll e^{-\frac{\epsilon}{6} N} \vert W_N\vert,
\end{equation*}
where the implied constant depends only on $m$ and $\epsilon$.
\end{cor}

\begin{proof} 
Fix $i\in [2m]$ and consider the set of reduced words of length $N$ starting with the letter $i,$
\begin{equation*}
B_{N}^i = \{ \textbf{a} = a_1\cdots a_N \in W_N : a_1 = i \}.
\end{equation*}
Observe that
\begin{equation*}
B_{N}^i[\rho] = \rho(\gamma_i)\left( W_{N-1}[\rho] - B_{N-1}^{i+m}[\rho] \right).
\end{equation*}
Applying the triangle inequality and using the fact that $\rho$ is unitary yields
\begin{equation}\label{triangle_ineq_ind_norm_1}
\Vert B_{N}^i[\rho]\Vert  \leqslant \Vert W_{N-1}[\rho] \Vert  + \Vert B_{N-1}^{i+m}[\rho] \Vert .
\end{equation}
Hence, setting
\begin{equation}
\mathfrak{b}_N(\rho) \defeq \max_{1\leqslant i\leqslant 2m} \Vert B_{N}^i[\rho]\Vert,
\end{equation}
gives
\begin{equation*}
\mathfrak{b}_N (\rho) \leqslant \Vert W_{N-1}[\rho] \Vert  + \mathfrak{b}_{N-1}(\rho).
\end{equation*}
Applying this inequality repeatedly yields
\begin{equation}\label{iterating_b}
\mathfrak{b}_N(\rho) \leqslant \sum_{k=0}^{N-1} \Vert W_{k}[\rho] \Vert .
\end{equation}
By Proposition \ref{main_expanding_prop},
$$
\Vert W_{k}[\rho] \Vert  \ll e^{-\frac{\epsilon}{4} k}\vert W_k\vert \ll e^{-\frac{\epsilon}{4} k}(2m-1)^k,
$$
where the implied constant does not depend on $k$. Inserting this into \eqref{iterating_b}, and using a geometric series summation together with $\vert W_N\vert \asymp (2m-1)^N$, yields
\begin{equation}\label{b_bound}
\mathfrak{b}_N (\rho) \ll \sum_{k=0}^{N-1} e^{-\frac{\epsilon}{4} k}(2m-1)^k \ll e^{-\frac{\epsilon}{5} N}(2m-1)^N \ll e^{-\frac{\epsilon}{5} N} \vert W_N\vert.
\end{equation}
Now fix $(i,j)\in [2m]^2$, let $A_{N}^{i,j}$ be as in \eqref{A_ij}, and observe that
\begin{equation}\label{second_rel}
A_{N}^{i,j}[\rho] = \left( B_{N-1}^{i}[\rho] - A_{N}^{i,j+m}[\rho]  \right) \rho(\gamma_j).
\end{equation}
Thus, setting
$$
\mathfrak{a}_{N}(\rho) \defeq \max_{1\leqslant i,j\leqslant 2m} \Vert A_{N}^{i,j}[\rho]\Vert,
$$
it follows from \eqref{second_rel} that
$$
\mathfrak{a}_N(\rho) \leqslant \mathfrak{b}_{N-1}(\rho) + \mathfrak{a}_{N-1}(\rho).
$$
Applying this bound repeatedly gives
$$
\mathfrak{a}_N(\rho) \leqslant \sum_{k=0}^{N-1} \mathfrak{b}_{k}(\rho).
$$
Using \eqref{b_bound} and arguing as above it follows that
$$
\mathfrak{a}_N(\rho) \leqslant \sum_{k=0}^{N-1} e^{-\frac{\epsilon}{5} k} \vert W_k\vert \ll e^{-\frac{\epsilon}{6} N} \vert W_N\vert,
$$
concluding the proof of Corollary \ref{cor_words_decay}.
\end{proof}

\subsection{Exponential decay for twisted averages}
We now state and prove the key result of this section.

\begin{prop}[Exponential decay for twisted averages]\label{prop:exp_dec_average}
Let $(\rho, V)$ be a unitary representation of $\Gamma$. Suppose that the operator $T(\rho)$ has all of its eigenvalues within the interval $[-1+\epsilon, 1-\epsilon]$. Then, for all $h\in C^1(I,V)$, for all $x\in I$ and for all $j\in [2m]$ such that $x\in I_j$, the following inequality holds:
\begin{equation*}
\left\Vert \mathbb{E}_{\textbf{a}\in W_N^j} \left( \rho(\gamma_\textbf{a})^{-1} h(\gamma_\textbf{a}(x)) \right)\right\Vert_V \ll e^{-c \epsilon N} \Vert h\Vert_{C^1}.
\end{equation*}
The implied constant and $c>0$ depend solely on $\Gamma.$
\end{prop}

\begin{proof}
We define for each pair of indices $(l,j)\in [2m]^2$ the following set:
$$
Z_{N}^{l,j} \defeq \{ \textbf{a} = a_1 \cdots a_N \in W_N : \text{$a_1 = l+m$ and $a_N \neq j$} \},
$$
This set comprises all words $\textbf{a}\in W_N$ such that $\gamma_\textbf{a}$ maps $D_j$ into the interior of $D_l$. We can re-express it as
$$
Z_{N}^{l,j} = \bigsqcup_{\substack{ 1\leqslant j'\leqslant 2m \\ j'\neq j }} A_{N}^{l+m,j'},
$$
where $A_{N}^{i,j}$ are the sets that we have introduced in \eqref{A_ij}. Thus, by Corollary \ref{cor_words_decay},
$$
\left\Vert Z_{N}^{l,j}[\rho]\right\Vert  \leqslant \sum_{\substack{ 1\leqslant j'\leqslant 2m \\ j'\neq j }} \left\Vert A_{N}^{l+m,j'}[\rho]\right\Vert  \ll e^{- \frac{\epsilon}{6} N} \vert W_N\vert. 
$$
One easily verifies that $Z_{N}^{l,j}$ and $W_{N}^{j}$ are of comparable size, i.e., $\vert Z_{N}^{l,j}\vert  \asymp (2m-1)^N \asymp\vert W_{N}^{j}\vert$ with implied constants independent of $N$. It follows that
\begin{equation}\label{apply_cor}
\left\Vert \mathbb{E}_{\textbf{a}\in Z_{N}^{l,j}} \left( \rho(\gamma_{\textbf{a}}) \right) \right\Vert  = \frac{1}{\vert Z_{N}^{l,j} \vert} \left\Vert Z_{N}^{l,j}[\rho]\right\Vert  \ll \frac{1}{\vert W_{N}^{j} \vert} \left\Vert Z_{N}^{l,j}[\rho]\right\Vert  \ll  e^{- \frac{\epsilon}{6} N}.
\end{equation}
We will use this estimate further below.

To proceed we use a decoupling argument similar to the one contained in \cite[Section~8]{BGS}. Write $N = N_1 + N_2,$ where $N_1$ and $N_2$ will be chosen further below, and observe that for every $j\in [2m]$ and $\textbf{a}\in W_N^j$ there are unique $l\in [2m]$ and $\textbf{a}_1\in W_{N_1}^l$ and $\textbf{a}_2\in Z_{N_2}^{l,j}$ such that $\textbf{a} = \textbf{a}_1 \textbf{a}_2$, where $\textbf{a}_1 \textbf{a}_2$ stands for the concatenation of $\textbf{a}_2$ and $\textbf{a}_2$. Therefore, for all $x\in I$,
$$
\sum_{\textbf{a}\in W_N^j} \rho(\gamma_\textbf{a})^{-1} h(\gamma_\textbf{a}(x))  = \sum_{\textbf{a}_1\in W_{N_1}^l} \sum_{\textbf{a}_2\in Z_{N_2}^{l,j}} \rho(\gamma_{\textbf{a}_1 \textbf{a}_2})^{-1} h(\gamma_{\textbf{a}_1 \textbf{a}_2} (x)) .
$$
Note that
\begin{equation*}
\vert W_N^j\vert \asymp \vert W_{N_1}^l\vert \vert Z_{N_2}^{l,j}\vert
\end{equation*}
with implied constants depending only on $m,$ so we obtain
\begin{equation}\label{sum_over_l}
\left\Vert \mathbb{E}_{\textbf{a}\in W_N^j} \left( \rho(\gamma_\textbf{a})^{-1} h(\gamma_\textbf{a}(x)) \right) \right\Vert_V \ll  \sum_{l=1}^{2m}  \underbrace{\mathbb{E}_{\textbf{a}_1\in W_{N_1}^l} \left\Vert \mathbb{E}_{\textbf{a}_2\in Z_{N_2}^{l,j}}\left(   \rho(\gamma_{\textbf{a}_1 \textbf{a}_2})^{-1} h(\gamma_{\textbf{a}_1 \textbf{a}_2} (x)) \right) \right\Vert_V}_{\defeq \Sigma_l}.
\end{equation}
It suffices to establish the bound
$$
\Sigma_l \ll e^{-c \epsilon N} \Vert h\Vert_{C^1(I)}
$$
for some constant $c=c(\Gamma) > 0.$ We rewrite
$$
\mathbb{E}_{\textbf{a}_2\in Z_{N_2}^{l,j}}\left(   \rho(\gamma_{\textbf{a}_1 \textbf{a}_2})^{-1} h(\gamma_{\textbf{a}_1 \textbf{a}_2} (x)) \right)
$$
as follows:
\begin{align*}
&\mathbb{E}_{\textbf{a}_2\in Z_{N_2}^{l,j}}\left[   \rho(\gamma_{\textbf{a}_1 \textbf{a}_2})^{-1}  \left( h(\gamma_{\textbf{a}_1 \textbf{a}_2} (x)) -  \mathbb{E}_{\textbf{b}_2\in Z_{N_2}^{l,j}}  h(\gamma_{\textbf{a}_1 \textbf{b}_2} (x))   \right)      \right]\\
&\qquad \qquad + \mathbb{E}_{\textbf{a}_2\in Z_{N_2}^{l,j}} \left(  \rho(\gamma_{\textbf{a}_1 \textbf{a}_2})^{-1} \mathbb{E}_{\textbf{b}_2\in Z_{N_2}^{l,j}}  h(\gamma_{\textbf{a}_1 \textbf{b}_2} (x)) \right)
\end{align*}
Applying the triangle inequality yields
\begin{align*}
&\left\Vert \mathbb{E}_{\textbf{a}_2\in Z_{N_2}^{l,j}}\left(   \rho(\gamma_{\textbf{a}_1 \textbf{a}_2})^{-1} h(\gamma_{\textbf{a}_1 \textbf{a}_2} (x)) \right) \right\Vert_V \leqslant  \\
&\quad \mathbb{E}_{\textbf{a}_2\in Z_{N_2}^{l,j}}   \left\Vert h(\gamma_{\textbf{a}_1 \textbf{a}_2} (x)) -  \mathbb{E}_{\textbf{b}_2\in Z_{N_2}^{l,j}}  h(\gamma_{\textbf{a}_1 \textbf{b}_2} (x))   \right\Vert_V  +    \left\Vert \mathbb{E}_{\textbf{a}_2\in Z_{N_2}^{l,j}} \left(  \rho(\gamma_{\textbf{a}_1 \textbf{a}_2})^{-1} \mathbb{E}_{\textbf{b}_2\in Z_{N_2}^{l,j}}  h(\gamma_{\textbf{a}_1 \textbf{b}_2} (x)) \right)\right\Vert_V.
\end{align*}
Thus we have
$$
\Sigma_l \leqslant \Sigma_l^{(1)} + \Sigma_l^{(2)},
$$
where
$$
\Sigma_l^{(1)} \defeq \mathbb{E}_{\textbf{a}_1\in W_{N_1}^l} \mathbb{E}_{\textbf{a}_2\in Z_{N_2}^{l,j}}   \left\Vert h(\gamma_{\textbf{a}_1 \textbf{a}_2} (x)) -  \mathbb{E}_{\textbf{b}_2\in Z_{N_2}^{l,j}}  h(\gamma_{\textbf{a}_1 \textbf{b}_2} (x))   \right\Vert_V
$$
and
$$
\Sigma_l^{(2)} \defeq \mathbb{E}_{\textbf{a}_1\in W_{N_1}^l} \left\Vert \mathbb{E}_{\textbf{a}_2\in Z_{N_2}^{l,j}} \left(  \rho(\gamma_{\textbf{a}_1 \textbf{a}_2})^{-1} \mathbb{E}_{\textbf{b}_2\in Z_{N_2}^{l,j}}  h(\gamma_{\textbf{a}_1 \textbf{b}_2} (x)) \right) \right\Vert_V.
$$
First we consider $\Sigma_l^{(1)}$. From the definition of $Z_{n}^{l,j}$ it follows that for all $\textbf{a}_2, \textbf{b}_2\in Z_{n_2}^{l,j}$, both $\gamma_{\textbf{a}_2}(x)$ and $\gamma_{\textbf{b}_2}(y)$ belong to the same interval $I_l$. For any pair of points $x_1,x_2$ in same interval, the fundamental theorem of calculus yields
$$
\Vert h(x_1)-h(x_2)\Vert_V = \left\Vert  \int_{x_1}^{x_2} h'(u)du \right\Vert_V \leqslant \vert x_1 - x_2\vert\cdot \Vert h'\Vert_\infty.
$$
Therefore, given $ \textbf{a}_1\in W_{n_1}^l$ and $\textbf{a}_2, \textbf{b}_2\in Z_{n_2}^{l,j}$, we obtain the estimate
\begin{align*}
\left\Vert h(\gamma_{\textbf{a}_1} (\gamma_{ \textbf{a}_2} (x))) -  h(\gamma_{\textbf{a}_1} (\gamma_{\textbf{b}_2} (x)))  \right\Vert_V 
&\leqslant \vert \gamma_{\textbf{a}_1} (\gamma_{ \textbf{a}_2} (x)) -  \gamma_{\textbf{a}_1} (\gamma_{\textbf{b}_2} (x)) \vert \Vert h'\Vert_\infty\\
&\leqslant \vert \gamma_{ \textbf{a}_2} (x) -  \gamma_{\textbf{b}_2} (x) \vert \sup_{z\in D_l} \vert \gamma_{\textbf{a}_1}'(z)\vert  \Vert h'\Vert_\infty\\
&\ll \Vert \gamma_{\textbf{a}_1}'\Vert_{\infty,I_l} \Vert h'\Vert_\infty.
\end{align*}
Thus,
\begin{align*}
\Sigma_l^{(1)} 
&\leqslant \mathbb{E}_{\textbf{a}_1\in W_{N_1}^l}  \mathbb{E}_{\textbf{a}_2\in Z_{N_2}^{l,j}} \mathbb{E}_{\textbf{b}_2\in Z_{N_2}^{l,j}} \left\Vert h(\gamma_{\textbf{a}_1 \textbf{a}_2} (x)) -  h(\gamma_{\textbf{a}_1 \textbf{b}_2} (x))  \right\Vert_V\\
&= \mathbb{E}_{\textbf{a}_1\in W_{N_1}^l}  \mathbb{E}_{\textbf{a}_2\in Z_{N_2}^{l,j}} \mathbb{E}_{\textbf{b}_2\in Z_{N_2}^{l,j}} \left\Vert h(\gamma_{\textbf{a}_1} (\gamma_{ \textbf{a}_2} (x))) -  h(\gamma_{\textbf{a}_1} (\gamma_{\textbf{b}_2} (x)))  \right\Vert_V\\
&\ll \mathbb{E}_{\textbf{a}_1\in W_{N_1}^l}  \mathbb{E}_{\textbf{a}_2\in Z_{N_2}^{l,j}} \mathbb{E}_{\textbf{b}_2\in Z_{N_2}^{l,j}} \Vert \gamma_{\textbf{a}_1}'\Vert_{\infty, I_l} \Vert h'\Vert_\infty\\
&= \mathbb{E}_{\textbf{a}_1\in W_{N_1}^l} \Vert \gamma_{\textbf{a}_1}'\Vert_{\infty,I_l} \Vert h'\Vert_\infty.
\end{align*}
Using the pressure estimate (Lemma \ref{pressure_lemma}) we obtain
\begin{equation}\label{S_1_final}
S_l^{(1)} \ll \frac{e^{N_1 P(1)}}{\vert W_{N_1}^l \vert} \Vert h'\Vert_\infty \ll e^{-c' N_1} \Vert h'\Vert_\infty,
\end{equation}
with $c' \defeq -P(1) +P(0)>0.$ 

Let us now concentrate on $\Sigma_l^{(2)}.$ Note that
\begin{align*}
\Sigma_l^{(2)} &= \mathbb{E}_{\textbf{a}_1\in W_{N_1}^l} \left\Vert \mathbb{E}_{\textbf{a}_2\in Z_{N_2}^{l,j}} \left( \rho(\gamma_{\textbf{a}_2})^{-1} \rho(\gamma_{\textbf{a}_1})^{-1} \mathbb{E}_{\textbf{b}_2\in Z_{N_2}^{l,j}}  h(\gamma_{\textbf{a}_1 \textbf{b}_2} (x))  \right) \right\Vert_V\\
&= \mathbb{E}_{\textbf{a}_1\in W_{N_1}^l} \left\Vert \mathbb{E}_{\textbf{a}_2\in Z_{N_2}^{l,j}} \left( \rho(\gamma_{\textbf{a}_2})^{-1} \right) \cdot \rho(\gamma_{\textbf{a}_1})^{-1} \cdot  \left( \mathbb{E}_{\textbf{b}_2\in Z_{N_2}^{l,j}}  h(\gamma_{\textbf{a}_1 \textbf{b}_2} (x))  \right) \right\Vert_V.
\end{align*}
The triangle inequality implies that
$$
\left\Vert \mathbb{E}_{\textbf{b}_2\in Z_{N_2}^{l,j}}  h(\gamma_{\textbf{a}_1 \textbf{b}_2} (x)) \right\Vert_V \leqslant \Vert h\Vert_{\infty}.
$$ 
Since $\rho$ is unitary we obtain
\begin{align*}
\Sigma_l^{(2)}
&\leqslant \mathbb{E}_{\textbf{a}_1\in W_{N_1}^l} \left\Vert \mathbb{E}_{\textbf{a}_2\in Z_{N_2}^{l,j}} \left( \rho(\gamma_{\textbf{a}_2})^{-1}\right)   \right\Vert      \left\Vert \mathbb{E}_{\textbf{b}_2\in Z_{N_2}^{l,j}}  h(\gamma_{\textbf{a}_1 \textbf{b}_2} (x)) \right\Vert_V\\
&\leqslant \mathbb{E}_{\textbf{a}_1\in W_{N_1}^l} \left\Vert \mathbb{E}_{\textbf{a}_2\in Z_{N_2}^{l,j}} \left( \rho(\gamma_{\textbf{a}_2})^{-1}\right)   \right\Vert      \Vert h\Vert_{\infty}\\
&= \mathbb{E}_{\textbf{a}_1\in W_{N_1}^l} \left\Vert \mathbb{E}_{\textbf{a}_2\in Z_{N_2}^{l,j}} \left( \rho(\gamma_{\textbf{a}_2})\right)   \right\Vert      \Vert h\Vert_{\infty}.
\end{align*}
In the last line we used that for any $\textbf{A}\in \mathrm{End}(W)$ the operator norm satisfies $\Vert \textbf{A}^\ast\Vert = \Vert \textbf{A}\Vert$, where $\textbf{A}^\ast$ denotes the adjoint of $\textbf{A}$, as well as
$$
\mathbb{E}_{\textbf{a}_2\in Z_{N_2}^{l,j}} \left( \rho(\gamma_{\textbf{a}_2})^{-1}\right) = \mathbb{E}_{\textbf{a}_2\in Z_{N_2}^{l,j}} \rho(\gamma_{\textbf{a}_2})^{\ast} = \left( \mathbb{E}_{\textbf{a}_2\in Z_{N_2}^{l,j}} \rho(\gamma_{\textbf{a}_2}) \right)^{\ast},
$$
which is once again a consequence of unitarity of $\rho.$ Hence, invoking the estimate \eqref{apply_cor}, we obtain
\begin{equation}\label{S_2_final}
\Sigma_l^{(2)} \ll  e^{- \frac{\epsilon}{6} N_2} \Vert h\Vert_{\infty}.
\end{equation}
Putting together \eqref{S_1_final} and \eqref{S_2_final} yields
\begin{equation}
\Sigma_l  \leqslant \Sigma_l^{(1)} + \Sigma_l^{(2)}  \ll e^{- \frac{\epsilon}{6} N_2} \Vert h\Vert_{\infty} + e^{-c'N_1}\Vert h'\Vert_\infty. 
\end{equation}
Finally, taking $N_1 = \lceil  \frac{\epsilon}{6c'+\epsilon} N \rceil $ and $N_2 = N-N_1, $ gives
\begin{equation}\label{finali}
\Sigma_l \ll e^{-  \frac{c'}{6c' + \epsilon} \epsilon N} \left( \Vert h\Vert_{\infty} + \Vert h'\Vert_\infty \right) = e^{- \frac{c'}{6c' + 1} \epsilon N} \Vert h\Vert_{C^1}
\end{equation}
for all $l\in [2m]$, completing the proof.
\end{proof}

\section{Proof of Theorem \ref{main_theorem}}\label{sec:part2mainthm}

We are now prepared to prove our main theorem. Fix a non-elementary Schottky group $\Gamma$ with Schottky generating set $\gamma_1 , \dots, \gamma_m$ as in §\ref{Schottky_groups}, fix a finite-dimensional, unitary representation $\rho\colon \Gamma\to \mathrm{U}(V),$ and put
$$
T(\rho) = \frac{1}{2m} \sum_{j=1}^{2m} \left(  \rho(\gamma_j) + \rho(\gamma_j)^{-1}\right).
$$
In view of the Venkov--Zograf induction formula \eqref{VZ_ind_formula} and Lemma \ref{lem:rel_trho_exp}, Theorem \ref{main_theorem} is a direct consequence of the following more general result:

\begin{theorem}\label{thm:second_part} Let notations and assumptions be as above. Assume that all the eigenvalues of $T(\rho)$ are inside the interval $[-1+\epsilon, 1-\epsilon]$ for some $\epsilon > 0$. Then, $Z_\Gamma(s,\rho)$ has no zeros in the half-plane $\mathrm{Re}(s)\geqslant \delta-\exp(-c \epsilon^{-1}),$ where the constant $c>0$ depends only on $\Gamma.$
\end{theorem}

\begin{proof}
Fix $\epsilon > 0$ and a finite-dimensional unitary representation $(\rho,V)$ of $\Gamma $ such that all the eigenvalues of $T(\rho)$ are inside $[-1+\epsilon, 1-\epsilon]$. Suppose that $Z_\Gamma(s,\rho)$ has a complex zero $s=\sigma + it$ satisfying $\sigma \geqslant \delta-\eta$. Our goal is to arrive at a contradiction with $\eta = \exp(-c \epsilon^{-1})$ for some positive constant $c = c(\Gamma) > 0$. Throughout the proof, all implied constants depend solely upon $\Gamma$.

Let $\eta_0 >0$ be the constant from Corollary \ref{cor:dist_zero_free}. We may assume that $\eta\in (0,\eta_0),$ in which case
\begin{equation}\label{tBoundApplUni}
\vert t\vert = O((\delta-\sigma)^{1/3}) = O(\eta^{1/3}).
\end{equation}
(The exponent $1/3$ may be replaced by any exponent strictly smaller than $1/2$, but $1/3$ is enough for our purposes.) 
 
Now we argue as in §\ref{sec:finishingDistZero}. If $Z_\Gamma(s,\rho) = 0$ then $\mathcal{L}_{s,\rho}$ has some non-trivial 1-eigenfunction, that is, there exists some $0\neq F\in H^2(D,V)$ satisfying $\mathcal{L}_{s,\rho}F = F$. Put $f = \varphi_\sigma^{-1} F$, let $x\in I = \bigsqcup_{j\in [2m]} I_j$ be a maximizer of the function $x\mapsto \Vert f(x)\Vert_V$, and let $j\in [2m]$ be the index for which $x\in I_j.$ By Proposition \ref{prop:strict_convexity}, for all $N\in \mathbb{N}$ and for all $\textbf{a}\in W_N^j$, the norm of
$$
f(x) - \gamma_{\textbf{a}}'(x)^{it} \rho(\gamma_\textbf{a})^{-1} f(\gamma_\textbf{a}(x))
$$
is bounded from above by
$$
O( e^{C_0 N}\sqrt{\eta} \Vert f\Vert_\infty),
$$
where the implied constant depends only on $\Gamma$. We simply write this as
\begin{equation}\label{applyinStrictConv}
f(x) = \gamma_{\textbf{a}}'(x)^{it} \rho(\gamma_\textbf{a})^{-1} f(\gamma_\textbf{a}(x)) + O( e^{C_0 N}\sqrt{\eta} \Vert f\Vert_\infty).
\end{equation}
The uniform hyperbolicity property in Proposition \ref{prop:DE} implies that for all $\textbf{a}\in W_N^j,$ 
$$
\vert \gamma_{\textbf{a}}'(x)^{it} - 1 \vert = \vert e^{it\log \gamma_{\textbf{a}}'(x)} - 1 \vert \ll \vert t\vert \vert \log \gamma_{\textbf{a}}'(x)\vert \ll N \vert t\vert.
$$
Combining this with \eqref{tBoundApplUni} it follows that
$$
\gamma_{\textbf{a}}'(x)^{it} = 1 + O(N \vert t\vert) = 1 + O\left( N \eta^{1/3} \right),
$$
which when inserted into \eqref{applyinStrictConv} gives
$$
f(x) = \rho(\gamma_\textbf{a})^{-1} f(\gamma_\textbf{a}(x)) + O\left(  \left( N \eta^{1/3} + e^{C_0 N} \sqrt{\eta} \right) \Vert f\Vert_\infty  \right).
$$
Now we average this over all words $\textbf{a}\in W_N^j$:
\begin{equation}\label{afterAverage}
f(x) = \mathbb{E}_{\textbf{a}\in W_N^j} \left( \rho(\gamma_\textbf{a})^{-1} f(\gamma_\textbf{a}(x)) \right) + O\left(  \left( N \eta^{1/3} + e^{C_0 N} \sqrt{\eta} \right) \Vert f\Vert_\infty  \right).
\end{equation}
Taking the norm of $V$ on both sides leads to
$$
\Vert f\Vert_\infty \leqslant \left\Vert \mathbb{E}_{\textbf{a}\in W_N^j} \left( \rho(\gamma_\textbf{a})^{-1} f(\gamma_\textbf{a}(x)) \right) \right\Vert_V + O\left(  \left( N \eta^{1/3} + e^{C_0 N} \sqrt{\eta} \right) \Vert f\Vert_\infty  \right).
$$
Invoking Proposition \ref{prop:exp_dec_average} gives
\begin{equation}\label{aboveBound}
\Vert f\Vert_\infty = O\left(  e^{-c \epsilon N} \Vert f\Vert_{C^1} + \left( N \eta^{1/3} + e^{C_0 N}\sqrt{\eta}  \right) \Vert f\Vert_\infty  \right).
\end{equation}
By Lemma \ref{estimate_der_eigenfct} we have
$$
\Vert f\Vert_{C^1} = O(\Vert f\Vert_\infty),
$$
which when inserted into \eqref{aboveBound} yields
$$
\Vert f\Vert_\infty  = O\left( \left( e^{-c \epsilon N} +  N \eta^{1/3} + e^{C_0 N}\sqrt{\eta}  \right) \Vert f\Vert_\infty  \right).
$$
Finally, as $f\neq 0$, we can divide both sides by $\Vert f\Vert_\infty \neq 0$ which gives
$$
1  \leqslant C \left(  e^{-c \epsilon N} +  N \eta^{1/3} + e^{C_0 N}\sqrt{\eta} \right)
$$
for some constant $C = C(\Gamma)>0$. To arrive at the desired contradiction we can now choose $N = \lceil c_0 \epsilon^{-1}\rceil$ and $\eta = \exp(-c_1 \epsilon^{-1})$ with some sufficiently large constants $c_0 > 0$ and $c_1 > 0$ depending only on $C$ and $C_0$. The proof of Theorem \ref{thm:second_part} is complete.
\end{proof}

\normalem
\bibliography{uniform_final} 
\bibliographystyle{amsplain}

\end{document}